\newtheorem{theorem}{Theorem}[section]
\newtheorem{lemma}[theorem]{Lemma}
\newtheorem{corollary}[theorem]{Corollary}
\newenvironment{proof}[1][Proof]{\begin{trivlist}
		\item[\hskip \labelsep {\bfseries #1}]}{\end{trivlist}}
\newenvironment{definition}[1][Definition]{\begin{trivlist}
		\item[\hskip \labelsep {\bfseries #1}]}{\end{trivlist}}
\newenvironment{remark}[1][Remark]{\begin{trivlist}
		\item[\hskip \labelsep {\bfseries #1}]}{\end{trivlist}}
\newcommand{\qed}{\nobreak \ifvmode \relax \else
	\ifdim\lastskip<1.5em \hskip-\lastskip
	\hskip1.5em plus0em minus0.5em \fi \nobreak
	\vrule height0.75em width0.5em depth0.25em\fi}
\title{Norm-dependent Lamperti-type MAP representations of stable processes and Brownian motions in the orthant}
\author{Andreas E. Kyprianou, Harry S. Mantelos, V\'ictor Rivero}
\date{\today}
\newcommand{\vertiii}[1]{{\left\vert\kern-0.25ex\left\vert\kern-0.25ex\left\vert #1 
		\right\vert\kern-0.25ex\right\vert\kern-0.25ex\right\vert}}
\begin{document}
	
	\maketitle
	\thispagestyle{empty}
	
\begin{abstract}
We start by remarking a one-to-one correspondence between self-similar Markov processes (ssMps) on a Banach space and Markov additive processes (MAPs) that is analogous to the well-known one between positive ssMps and L\'evy processes through the renowned Lamperti-transform, with the main difference that ours is norm-dependent. We then consider multidimensional self-similar Markov processes obtained by killing or by reflecting a stable process or Brownian motion in the orthant and we then fully describe the MAPs associated to them using the $L_1$-norm. Namely, we describe the MAP underlying the ssMp obtained by killing a $d$-dimensional $\alpha$-stable process when it leaves the orthant and the one obtained by reflecting it back in the orthant continuously (or by a jump); finally, we also describe the MAP underlying $d$-dimensional Brownian motion reflected in the orthant. The first three of the aforementioned examples are pure-jump, and the last is a diffusion, so their characterization is given through their L\'evy system, generator and/or through the modulated SDE that defines them, respectively. 
\end{abstract}

\section{Introduction and background}
Lamperti's transform for self-similar Markov processes establishes a bijection between positive self-similar Markov processes (pssMps) and L\'evy processes; cf. \cite{lamperti1972semi}. Its proof can be found in the lecture notes \cite{chaumont2006introduction}, as well as in Chapter 13.3 of \cite{kyprianou2014fluctuations} (in a more modern and complete presentation). Such a bijection has proved to be instrumental both in the understanding of path and distributional properties of the former class and in pushing the boundaries of the knowledge around the celebrated Wiener-Hopf factorization for Lévy processes. Particularly, it has given rise to the construction of new classes of Lévy processes for which explicit identities can be obtained. The recent book by Kyprianou and Pardo, \cite{kyprianou2022stable}, and the references therein present a thorough account of this. One work that sparked this chain of events is that of Caballero and Chaumont, \cite{caballero2006conditioned}, where the authors obtained a precise description of the Lévy processes underlying a \textit{stable process killed at its first passage time below zero}, a \textit{stable process conditioned to stay positive} and \textit{one conditioned to hit zero continuously}. Actually, one of the motivations for the current paper is the study of multidimensional versions of such pssMps, together with one other example of a pssMp obtained by reflecting a stable process in the positive half line, studied in \cite{kyprianou2014hitting}.

Later, Chaumont, Panti and Rivero in \cite{chaumont2013lamperti} extended Lamperti's result from the positive half-line to the real line, and then subsequently, Alili, Chaumont, Graczyk and Zak in \cite{alili2017inversion} to $\mathbb{R}^d$, by proving that the class of $\mathbb{R}^{d}$-valued ssMps is in bijection with Markov additive processes (MAPs). This bijection has been very helpful in the understanding of processes related to stable processes and to lay the foundations of a general fluctuation theory for MAP, extending the one for Markov additive processes where the modulator has a discrete state space. The latter have played a prominent role in, e.g., classical applied probability models for queues and dams.

In the present paper our aim is twofold: firstly, to extend the above-described bijections to cover the case of ssMps valued in a general Banach space with a given norm; and secondly, and that which our paper primarily serves for, is to offer a higher-dimensional extension of the theory from \cite{caballero2006conditioned} pertaining to the examples of stable processes and their respective underlying L\'evy processes previously mentioned. We do this through the study of various high-dimensional ssMps related to stable processes living, not in the positive half-line, but in the positive orthant of $\mathbb{R}^d$, that is $[0,\infty)^d$. This study will make evident the dependence on the choice of norm on $\mathbb{R}^d$.

To make more precise statements we next recall the formal definitions of ssMps and MAPs. We note that whenever we refer to a stochastic process as strong Markov, we mean it in the sense given in Chapter A.11 of \cite{kyprianou2022stable}. For consistency, we also remark that the definition of one-dimensional $\alpha$-stable process we abide to throughout the paper will always be that given in Definition 3.1 of \cite{kyprianou2022stable}.

Hereafter, $E$ will denote a locally compact and separable Banach space over the field of reals, equipped with a given norm $\|\cdot\|.$ 

\begin{definition}
	\label{def_ssMp}
An $E$-valued strong Markov process $Z=(Z_t)_{t\geq 0}$ defined on some (filtered) probability space $(\Omega,\mathcal{F},(\mathcal{F}_t)_{t\geq 0},\mathbb{P})$, with probabilities $(\mathbb{P}_x, x\in E)$, is said to be a \textit{self-similar Markov process} if it satisfies the \textit{self-similarity property}: there exists an $\alpha>0$ such that for every $c>0$ and $x\in E$,
	\begin{equation}
		\label{ss_property}
		(\{cZ_{c^{-\alpha}t},t\geq0\},\mathbb{P}_x)\stackrel{\text{d}}{=}(\{Z_t,t\geq0\},\mathbb{P}_{cx}).
	\end{equation}
	We then say that $Z$ is a ssMp with \textit{index of self-similarity} $\alpha$.
\end{definition}

\begin{definition}
	\label{def_MAPs}
	Let $E$ be a locally compact  and separable Banach space. An $\mathbb{R}\times E$-valued regular strong Markov process $(\xi,\Xi)=(\xi_t,\Xi_t)_{t\geq0}$, living in some (filtered) probability space $(\Omega,\mathcal{G},(\mathcal{G}_t)_{t\geq0},\mathbb{P})$, with probabilities $\left(\mathbb{P}_{(x,\theta)}, (x,\theta)\in\mathbb{R}\times E\right)$, and cemetery state $(-\infty,\partial)$, where $\partial$ is a point not in $E$, is called a \textit{Markov additive process (MAP)} if $\Xi=(\Xi_t)_{t\geq0}$ is a regular strong Markov process on $E$ with cemetery state $\partial$ such that, for every bounded measurable function $f:\mathbb{R}\times E\to \mathbb{R}^{+}$, $t,s\geq0$ and $(y,\theta)\in\mathbb{R}\times E$,
	\begin{equation}
		\label{MAP_eqn2}
		\mathbb{E}_{(y,\theta)}[f(\xi_{t+s}-\xi_t,\Xi_{t+s})\mathbbm{1}_{\{t+s<\zeta\}}|\mathcal{G}_t]=\mathbb{E}_{(0,\theta')}[f(\xi_{s},\Xi_{s})\mathbbm{1}_{\{s<\zeta\}}]\Bigr|_{\substack{\theta'=\Xi_t}},\quad \mathbb{P}_{(y,\theta)}-\text{a.s.},
	\end{equation} 
	where as usual $\zeta\coloneqq \inf\{t>0:\Xi_t=\partial\}$ denotes the lifetime of $(\xi,\Xi)$. We call the process $\Xi=(\Xi_t)_{t\geq0}$ the \textit{modulator} of the MAP, and $\xi=(\xi_t)_{t\geq0}$ the \textit{ordinate}.
\end{definition}
Property (\ref{MAP_eqn2}) is arguably the defining property of MAPs. It is reminiscent of the stationary and independent increments property of L\'evy processes, and ensures that conditionally on $\Xi$ the ordinate is an additive process, that is, it has independent increments. One of its most important implications is the translation invariance property of the ordinate: 
\begin{equation}
	\label{additive_lemma}
	(\{(a+\xi_t,\Xi_t),t\geq0\},\mathbb{P}_{(x,\theta)})\stackrel{\text{d}}{=}(\{(\xi_t,\Xi_t),t\geq0\},\mathbb{P}_{(a+x,\theta)}),\qquad(x,\theta)\in\mathbb{R}\times E,\enspace a\in\mathbb{R}.
\end{equation}
The pioneering papers of Cinlar: \cite{ccinlar1972markovI}, \cite{ccinlar1972markovII}, \cite{cinlar1975exceptional}, \cite{ccinlar1973levy} and \cite{ccinlar1976entrance} are one of the main sources of information about general MAP. 

In order to state the earlier-mentioned generalization of Lamperti's transform for a general Banach space $E$ over the field of reals, equipped with a given norm $\|\cdot\|$, we recall that any element $x\in E$ possesses a (unique) \textit{$\|\cdot\|$-polar decomposition}: $x$ can be uniquely identified with the vector 
	\begin{equation}
		\label{def_norm_polar_decomp_of_vector}
		(\|x\|,\arg(x))\in[0,\infty)\times\mathcal{S}_{\|\cdot\|},
	\end{equation}
	where $\arg(x)\coloneqq\frac{1}{\|x\|}x$, and $\mathcal{S}_{\|\cdot\|}\coloneqq\{z\in E:\|z\|=1\}$ denotes the unit sphere of $E$ with respect to $\|\cdot\|$. As a convention, we take $(0,\partial)$ to be the $\|\cdot\|$-polar representation of the vector $0\in E$. 
	
Later on, we will require the notation \begin{equation}\label{L1}
\mathcal{S}_{1}^d\coloneqq \{\boldsymbol{x}\in\mathbb{R}^d:\|\boldsymbol{x}\|_{1}=1\},\qquad \mathcal{S}_{1}^{d,+}\coloneqq\mathcal{S}_{1}^d\cap (0,\infty)^d\coloneqq \{\boldsymbol{x}\in(0,\infty)^d:\|\boldsymbol{x}\|_{1}=1\}.
\end{equation}

We can now state our first main result.

\begin{theorem}
	\label{thm_norm_dep_lamperti_transform}
	Let $E$ be a locally compact  and separable Banach space over the field of reals, equipped with a norm $\|\cdot\|$. Fix $\alpha>0$. Let $(\xi_t,\Xi_t)_{t\geq0}$ be an $\mathbb{R}\times \mathcal{S}_{\|\cdot\|}$-valued MAP with cemetery state $(-\infty,\partial)$ and lifetime $\zeta$. Set $\phi(t)\coloneqq \inf\{s>0: \int_0^s {\rm e}^{\alpha\xi_r}dr>t\}$, $t\geq0$. Then, the $E$-valued stochastic process that has the $\|\cdot\|$-polar decomposition 
	\begin{equation}
		\label{thm_norm_rep_process}
		({\rm e}^{\xi_{\phi(t)}},\Xi_{\phi(t)})_{t\geq 0},
	\end{equation}
	where we understand ${\rm e}^{-\infty}=0$, is a ssMp with index of self-similarity $\alpha$, cemetery state $0\in E$ and lifetime $K=\int_0^\zeta {\rm e}^{\alpha\xi_s} ds$. 
	
	Conversely, let $(Z_t)_{t\geq0}$ be an $E$-valued ssMp with index of self-similarity $\alpha$, cemetery state $0$ and lifetime $K\coloneqq\inf\{t>0: \|Z_t\|=0\}$. The $\mathbb{R}\times\mathcal{S}_{\|\cdot\|}$-valued process $(\xi,\Xi)$, defined by the transformation
	\begin{equation}
		\label{def_of_underlying_norm_dep_MAP}
		(\xi_t,\Xi_t)=\begin{cases}
			(\log(\|Z_{I_t}\|),\frac{1}{\|Z_{I_t}\|}Z_{I_t}), & \text{if $t<\zeta$}\\
			(-\infty,\partial), & \text{if $t\geq \zeta$};
		\end{cases}
	\end{equation}
	where $I_t\coloneqq\inf\{s>0:\int_0^s \|Z_r\|^{-\alpha} dr>t\}$, $t\geq 0;$  $\zeta:=\int^{K}_0 \|Z_r\|^{-\alpha} dr.$  
	is a MAP with cemetery state $(-\infty,\partial)$ and lifetime $\zeta$. We call $(\xi_t,\Xi_t)_{t\geq0}$ the underlying MAP of the ssMp $(Z_t)_{t\geq0}$.
\end{theorem}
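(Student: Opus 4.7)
The plan is to verify the two directions by exploiting the defining MAP property (\ref{MAP_eqn2}), the self-similarity relation (\ref{ss_property}), the translation invariance (\ref{additive_lemma}), and the way the time-changes $\phi$ and $I$ react to additive, resp. multiplicative, shifts of their integrands. Throughout, the polar decomposition (\ref{def_norm_polar_decomp_of_vector}) is what glues the two frameworks together.

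For the forward direction, set $Z_t := {\rm e}^{\xi_{\phi(t)}}\Xi_{\phi(t)}$ and fix $c>0$. By (\ref{additive_lemma}), the shifted ordinate $\tilde\xi_s := \xi_s + \log c$ under $\mathbb{P}_{(\log r,\theta)}$ has the law of $\xi$ under $\mathbb{P}_{(\log(cr),\theta)}$. A change of variables in $\int_0^s {\rm e}^{\alpha\tilde\xi_r}\,dr = c^\alpha\int_0^s {\rm e}^{\alpha\xi_r}\,dr$ gives $\tilde\phi(t)=\phi(c^{-\alpha}t)$, from which ${\rm e}^{\tilde\xi_{\tilde\phi(t)}}\Xi_{\tilde\phi(t)} = c\,Z_{c^{-\alpha}t}$, so that (\ref{ss_property}) is immediate. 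The strong Markov property of $Z$ follows from the general fact that a strong Markov process time-changed by the right-continuous inverse of a strictly increasing continuous additive functional (here $A_s = \int_0^s {\rm e}^{\alpha\xi_r}\,dr$) is again strong Markov; the identity $K = A_\zeta$ and the absorption at $0\in E$ via the convention ${\rm e}^{-\infty}=0$ are direct computations.

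For the converse, the delicate point is (\ref{MAP_eqn2}). Fix $t\geq 0$ and set $\tau := I_t$, which is a stopping time for $Z$ since it is the right-continuous inverse of an additive functional. The strong Markov property of $Z$ at $\tau$ gives that $(Z_{\tau+u})_{u\geq 0}$, conditioned on $\mathcal{F}_\tau$, is distributed as $Z$ started from $Z_\tau = {\rm e}^{\xi_t}\Xi_t$. Applying (\ref{ss_property}) with $c = {\rm e}^{-\xi_t}$ then shows that $Y_u := {\rm e}^{-\xi_t}Z_{\tau + {\rm e}^{\alpha\xi_t}u}$ has, conditioned on $\mathcal{G}_t$, the law of $Z$ started from $\Xi_t\in\mathcal{S}_{\|\cdot\|}$. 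It remains to identify the post-$t$ evolution of $(\xi,\Xi)$ with the transformation (\ref{def_of_underlying_norm_dep_MAP}) applied to $Y$: a change of variables in the integral defining $I$ yields $I^Y_s = {\rm e}^{-\alpha\xi_t}(I_{t+s}-\tau)$, whence the pathwise identity $(\log\|Y_{I^Y_s}\|,\arg(Y_{I^Y_s})) = (\xi_{t+s}-\xi_t,\Xi_{t+s})$. Plugging this into the conditional law of $Y$ yields exactly (\ref{MAP_eqn2}).

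I expect the principal obstacle to be technical bookkeeping rather than conceptual: verifying that the integrands $\|Z_r\|^{-\alpha}$ and ${\rm e}^{\alpha\xi_r}$ make the two time-changes bijective on $[0,K)$ and $[0,\zeta)$ respectively (in particular that $Z$ does not touch $0$ before $K$), that the lifetimes correspond as claimed, that the cemetery-state conventions are consistent on both sides, and that the regularity hypothesis (strong Markov with respect to the appropriate augmented/time-changed filtration) passes through the transformation. Once these standard but somewhat delicate technicalities are handled, the two pathwise scaling identities above form the conceptual heart of both directions.
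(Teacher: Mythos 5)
Your proposal is correct and follows essentially the same route as the paper: the forward direction via the translation invariance \eqref{additive_lemma} combined with the time-change identity $\tilde\phi(t)=\phi(c^{-\alpha}t)$, and the converse via the strong Markov property of $Z$ at the stopping time $I_t$ followed by a rescaling that reduces the post-$t$ increment to a copy of $(\xi,\Xi)$ started from $(0,\Xi_t)$. The only cosmetic difference is that you make the self-similarity rescaling of the post-$I_t$ path explicit (via $Y_u={\rm e}^{-\xi_t}Z_{I_t+{\rm e}^{\alpha\xi_t}u}$), whereas the paper absorbs it into the scale-invariance of the functional $\log(\|Z_{I_s}\|/\|Z_0\|)$; the substance is identical.
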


We reference a version of this result, applied in a different way and in a much different context, as it occurred that we had happened to work on this type of generalization of the Lamperti transform at the same time, in the recent paper of Siri-J\'egousse and Wences, \cite{siri2024lamperti}.

In this paper, we are interested in using the general description given in the converse of Theorem \ref{thm_norm_dep_lamperti_transform} to describe the underlying MAPs of four ssMps, where $E$ therein will be taken to be the positive orthant in $\mathbb{R}^d$, and $\norm{\cdot} = \norm{\cdot}_1$, the $L_1$-norm on this space. The special attention we give to the case of $\|\cdot\|=\|\cdot\|_{1}$ brings about especially aesthetic closed-form formulae and it contrasts with the usual $L_2$ norm. Of course the results can be given in the more general setting of any $L_p$-norm (or any other norm, for that matter), but for the sake of brevity we provide no further details on this in this paper. For further details we invite the interested reader to consult the PhD-thesis of one of the authors of this paper, HSM.

The four stochastic processes we consider are:

\begin{enumerate}[(a)]
	\item A \textit{$d$-dimensional $\alpha$-stable process killed upon exiting the orthant} taking the form $Z=(Z_t)_{t\geq0}$,
		\begin{equation}
			\label{item_a}
		Z_t=(X^{(1)}_t,\ldots,X^{(d)}_t)\mathbbm{1}_{\{t<\tau^D\}},\enspace\tau^D\coloneqq\inf\{t:X^{(i)}_t<0\text{ for some $1\leq i\leq d$}\},\qquad t\geq0,
		\end{equation}
	where the $X^{(i)}=(X^{(i)}_t)_{t\geq0}$ are independent one-dimensional $\alpha$-stable processes with both positive and negative jumps.
	\item \textit{A reflected $d$-dimensional symmetric $\alpha$-stable process} taking the form $\hat{X}=(\hat{X}_t)_{t\geq0}$,
		\begin{equation}
			\label{item_b}
		\hat{X}_t=(|X^{(1)}_t|,\ldots,|X^{(d)}_t|),\qquad t\geq0,
		\end{equation}
	where the $X^{(i)}=(X^{(i)}_t)_{t\geq0}$ are iid one-dimensional symmetric $\alpha$-stable processes, killed and absorbed at the first time $\hat{X}$ hits the zero vector $\boldsymbol{0}_d\in\mathbb{R}^d$.
	\item A \textit{Skorokhod-reflected $d$-dimensional $\alpha$-stable process} taking the form $R=(R_t)_{t\geq0}$,
		\begin{equation}
			\label{item_c}
		R_t=(X_t^{(1)}-(0\land\underbar{X}_t^{(1)}),\ldots,X_t^{(d)}-(0\land\underbar{X}_t^{(d)})),\qquad t\geq0,
		\end{equation}
	where the $X^{(i)}=(X^{(i)}_t)_{t\geq0}$ are iid one-dimensional spectrally-positive $\alpha$-stable processes, $$\underbar{X}_t^{(i)}\coloneqq\inf_{s\leq t}X_s^{(i)};$$ and we assume the process $R$ is killed and absorbed at the first time it hits the zero vector $\boldsymbol{0}_d\in\mathbb{R}^{d}$.
	\item A \textit{Skorokhod-reflected $d$-dimensional Brownian motion} taking the form $\mathcal{R}=(\mathcal{R}_t)_{t\geq0}$,
		\begin{equation}
			\label{item_d}
		\mathcal{R}_t=(B_t^{(1)}-(0\land\underbar{B}_t^{(1)}),\ldots,B_t^{(d)}-(0\land\underbar{B}_t^{(d)})),\qquad t\geq0,
		\end{equation}
	where the $B^{(i)}=(B^{(i)}_t)_{t\geq0}$ are iid (one-dimensional) Brownian motions, and $$\underbar{B}_t^{(i)}\coloneqq\inf_{s\leq t}B_s^{(i)},$$ where we assume that $\mathcal{R}$ is killed and absorbed at its first hitting time of $\boldsymbol{0}_d\in\mathbb{R}^{d}.$
\end{enumerate}

By an $\alpha$-stable process, $(X_t)_{t\geq0}$, we mean  the L\'evy process with jump measure $\Pi$ taking the form
\begin{equation}
	\label{def_levy_measures_of_1dim_indep_stables}
	\Pi(dx)=|x|^{-(1+\alpha)}(c_1\mathbbm{1}_{\{x>0\}}+c_2\mathbbm{1}_{\{x<0\}})dx,\qquad x\in\mathbb{R},
\end{equation}
where 
\begin{equation}
c_1\coloneqq \frac{\Gamma(1+\alpha)\sin(\pi\alpha\rho)}{\pi} \text{ and }c_2\coloneqq \frac{\Gamma(1+\alpha)\sin(\pi\alpha(1-\rho))}{\pi}
\label{c-s}
\end{equation}
Moreover, its  characteristic exponent   satisfies
\begin{equation}
	\label{char_exp_of_1d_stable}
	\Psi (z)%=\int_{\mathbb{R}} (1-{\rm e}^{-i\lambda x}+i\lambda x\mathbbm{1}_{\{|x|<1\}})\Pi (dx) 
	=  |z|^{\alpha} \left({\rm e}^{\pi i \alpha (\frac{1}{2}-\rho)} {\bf 1}_{(z>0)}+ 
 {\rm e}^{-\pi i \alpha (\frac{1}{2}-\rho)} {\bf 1}_{(z<0)}\right), \qquad z\in\mathbb{R}.
\end{equation}
The setting of positive and negative jumps conforms to the parameter regime $\rho\in(0,1)$. Symmetric $\alpha$-stable processes correspond to the setting $\rho = 1/2$. The spectrally positive setting conforms to the parameter regime that $\alpha\in(1,2)$ and $\alpha(1-\rho) = 1$.

{In each of the four cases (a)-(d) above, establishing that the processes $Z$, $\hat{X}$, $R$ and $\mathcal{R}$ are self-similar Markov processes is a straightforward variant of their one dimensional versions.}

\begin{comment}{The choice of $\|\cdot\|$ alters the geometry of the unit sphere of $\mathbb{R}^d$ in which the underlying modulator lives in; e.g., for $\|\cdot\|=\|\cdot\|_{1}$ it is the simplex; for $\|\cdot\|=\|\cdot\|_{2}$ it's the usual (Euclidean) unit sphere. The special attention we give to the case of $\|\cdot\|=\|\cdot\|_{1}$ brings about especially aesthetic closed-form formulae. For this reason, we have chosen to present here in the introduction some of the dimension $d=2$ versions, thereby giving some flavour of the main body of the paper. }\end{comment}

Our description of the underlying MAPs of the above-listed ssMps is given by means of their infinitesimal generator. In the case of the processes arising from (a) and (b), one will see that  that their infinitesimal generator has no local part, and hence are totally determined by their inherent killing rate and respective jump structures, which in turn are totally determined by their respective L\'evy system, of which we adopt the definitions and terminology given in \cite{kyprianou2025stronglawlargenumbers} and \cite{ccinlar1973levy}, which we briefly recall next.

\begin{definition}
	Let $(\xi,\Theta)=(\xi_t,\Theta_t)_{t\geq 0}$ be a $\mathbb{R}\times E$-valued MAP. By a {\it L\'evy system} for $(\xi,\Theta)$ we mean a couple $(H, L)$, where $H$ is an additive functional $t\mapsto H_t$, and  $L$ is a kernel from $(E,\mathcal{E})$ to $(E\times\mathbb{R},\mathcal{E}\bigotimes\mathcal{B}(\mathbb{R})),$ such that for every bounded measurable function $f:\mathbb{R}^{2}\times E^{2}\to[0,\infty)$, $\boldsymbol{\theta}\in E$ and $t\geq0$,
	\begin{equation}
	\begin{split}
		\label{def_levy_systems_for_MAPs_ver3}
		&\mathbb{E}_{(0,\boldsymbol{\theta})}\Bigg[\sum_{s\leq t}f(\xi_{s-},\Delta \xi_s, \Theta_{s-}, \Theta_{s}) \mathbbm{1}_{\{\Delta \xi_s \neq 0\}}\Bigg]\\
		&=\mathbb{E}_{(0,\boldsymbol{\theta})}\Bigg[\int_0^t \int_{\mathbb{R}\times E} f(\xi_{s},y,\Theta_s,\boldsymbol{\phi}) L(\Theta_s,d\boldsymbol{\phi},dy)dH_s\Bigg].
		\end{split}
	\end{equation}
 We refer to the transition kernel $L$ as the \textit{jump kernel} of the MAP.
\end{definition}

As remarked in \cite{kyprianou2025stronglawlargenumbers}, in most examples of MAP appearing in the literature, the additive functional $H$ has the form $H_t=t\wedge \zeta$, where $\zeta$ denotes the lifetime of the MAP. In fact, this also happens to be the case with all the MAPs we characterize in the paper as well. As such, we will always omit writing about the L\'evy system of the respective MAP, and solely refer to its jump kernel (hence it will be understood that $H$ is of the above form). 

In the case of the processes arising in (c) and (d), their respective underlying MAPs will be described through their generators; and in the special case of dimension $d=2$, in terms of a solution to a modulated SDE. While describing MAPs associated to transformations of stable processes via their jump structure, at least in dimension one, has received quite a bit of attention in recent years, the case of MAPs that involve a boundary condition (like the one we will study arising from the ssMp described in (c)) is more novel and believe will strike interest in pursuing their understanding. 

\begin{lemma}
	\label{compact_thm_killing_rate}
 Let $(\xi,\Xi)=(\xi_t,\Xi_t)_{t\geq0}$ be the underlying MAP of $Z$ from (\ref{item_a}) with respect to $\|\cdot\|_1$. The killing rate of $(\xi,\Xi)$ is given by
	\begin{equation}
		q(\boldsymbol{x})=\mathbbm{1}_{(0,\infty)^d}(\boldsymbol{x})\frac{1}{\alpha}\sum_{k=1}^d \frac{\Gamma(1+\alpha)\sin(\pi\alpha(1-\rho_k))}{\pi} \Big(\frac{x_k}{\|\boldsymbol{x}\|_1}\Big)^{-\alpha},\quad \boldsymbol{x}=(x_1,\ldots,x_d)\in(0,\infty)^d,
		\label{q}
	\end{equation}
where $\rho_k\coloneqq\mathbb{P}(X^{(k)}_1>0)$ is the positivity parameter of $X^{(k)}$ from the description in  (\ref{item_a}).
\end{lemma}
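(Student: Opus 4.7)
The plan is to compute the killing rate of the ssMp $Z$ at an arbitrary point of the open orthant directly from its jump structure, and then convert this into the killing rate of its underlying MAP using the Lamperti-type time-change of Theorem \ref{thm_norm_dep_lamperti_transform}.

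First, since each coordinate $X^{(k)}$ is a pure-jump, non-creeping $\alpha$-stable process, $Z$ can exit the orthant only via a jump of one of its coordinates into $(-\infty,0)$. By independence of the $X^{(k)}$, the killing rate of $Z$ at $\boldsymbol{x}=(x_1,\ldots,x_d)\in(0,\infty)^d$ is $q_Z(\boldsymbol{x})=\sum_{k=1}^d \Pi_k((-\infty,-x_k])$, where $\Pi_k$ is the L\'evy measure from (\ref{def_levy_measures_of_1dim_indep_stables}) with positivity parameter $\rho_k$. A direct integration then gives $\Pi_k((-\infty,-x_k])=c_2^{(k)}/(\alpha x_k^{\alpha})$ with $c_2^{(k)}=\Gamma(1+\alpha)\sin(\pi\alpha(1-\rho_k))/\pi$, so $q_Z(\boldsymbol{x})=\alpha^{-1}\sum_k c_2^{(k)}x_k^{-\alpha}$.

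Next, the time-change $I_t$ inverts $A_t=\int_0^t \|Z_r\|_1^{-\alpha}\,dr$, so $dI_t/dt=\|Z_{I_t}\|_1^{\alpha}$; in particular, instantaneous rates of events (killing included) seen in $Z$-time are multiplied by $\|\boldsymbol{x}\|_1^{\alpha}$ when passed to MAP-time. Hence the killing rate of $(\xi,\Xi)$ at modulator state $\boldsymbol{\theta}\in\mathcal{S}_1^{d,+}$ and ordinate $\xi$ equals $e^{\alpha\xi}q_Z(e^{\xi}\boldsymbol{\theta})=\alpha^{-1}\sum_k c_2^{(k)}\theta_k^{-\alpha}$, which, as dictated by (\ref{MAP_eqn2}), is independent of $\xi$. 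Formula (\ref{q}) is then the $0$-homogeneous extension of this expression to all $\boldsymbol{x}\in(0,\infty)^d$, equivalently $q(\boldsymbol{x})=\|\boldsymbol{x}\|_1^{\alpha}q_Z(\boldsymbol{x})$.

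The main step requiring care is the rigorous justification of the time-change scaling. The cleanest way is to expand $\mathbb{P}_{(\xi,\boldsymbol{\theta})}(\zeta\leq\varepsilon)=\mathbb{P}_{e^{\xi}\boldsymbol{\theta}}(A_{\tau^D}\leq\varepsilon)$ to first order in $\varepsilon$, combining $A_t\sim\|e^{\xi}\boldsymbol{\theta}\|_1^{-\alpha}t$ as $t\downarrow 0$ with the L\'evy-system identity $\mathbb{P}_{\boldsymbol{x}}(\tau^D\leq t)=q_Z(\boldsymbol{x})t+o(t)$ for the killed multidimensional L\'evy process. The $\xi$-independence of the resulting MAP killing rate then serves as a consistency check, reflecting both the MAP property and the self-similarity of $Z$.
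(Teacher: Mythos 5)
Your proposal is correct and its computational core coincides with the paper's: killing of $Z$ can only occur through a jump of a single coordinate into $(-\infty,0)$ (no creeping), the instantaneous rate is $\sum_k\Pi_k((-\infty,-x_k))=\alpha^{-1}\sum_k c_2^{(k)}x_k^{-\alpha}$, and the Lamperti time change rescales this by $\|\boldsymbol{x}\|_1^{\alpha}$, yielding $\alpha^{-1}\sum_k c_2^{(k)}\theta_k^{-\alpha}$, independent of the ordinate. The only real divergence is in how rigor is achieved. The paper computes $\mathbb{E}_{(0,\boldsymbol{\theta})}[f(\zeta)]$ for a general bounded $f$ by writing $\zeta$ as a functional evaluated at the exit jump, applying the compensation formula for the Poisson random measure of jumps of $X$, using the $(-\alpha)$-homogeneity \eqref{lemma_levy_measure_pullout_property} of $\Pi$ after rescaling by $\|X_t\|_1$, and then changing variables $t=I_v$ to land on $\mathbb{E}_{(0,\boldsymbol{\theta})}[\int_0^{\zeta}q(\Xi_v)f(v)\,dv]$, from which the rate is read off by taking $f=\mathbbm{1}_{(0,T)}$. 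Your route instead extracts the rate from a first-order expansion of $\mathbb{P}_{(\xi,\boldsymbol{\theta})}(\zeta\leq\varepsilon)$; this works, but the ingredient $\mathbb{P}_{\boldsymbol{x}}(\tau^D\leq t)=q_Z(\boldsymbol{x})t+o(t)$ is itself most naturally proved by the same compensation-formula argument (one must rule out that the small, compensated jumps or the drift contribute at order $t$ to the exit probability), so the paper's version is the cleaner and more self-contained packaging of the same idea; it also delivers the stronger statement that $\zeta$ has conditional hazard $q(\Xi_v)$ along the whole trajectory, not just at time zero.
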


Below is a result that describes the jumps of the underlying MAP, $(\xi,\Xi)$, of the ssMp $Z$ from (\ref{item_a}) with respect to the $L^1$ norm in dimension $d>1$. In order to state the result, we introduce the following notation for a particular vector in $\mathbb{R}^d$ (which makes a frequent appearance in our paper): for $y\in\mathbb{R},\enspace1\leq j \leq d$, and $\boldsymbol{\theta}=(\theta^{(1)},\ldots,\theta^{(d)})\in\mathbb{R}^d$, we define the vector
\begin{equation}
	\label{frequent_vector}
	\boldsymbol{v}^{\boldsymbol{\theta}}_{j,y}\coloneqq {\rm e}^{-y}(\theta^{(j)}-1+{\rm e}^{y})\boldsymbol{e}_j+\sum_{k\neq j} {\rm e}^{-y}\theta^{(k)}\boldsymbol{e}_k,
\end{equation}
where as usual, $\{\boldsymbol{e}_j:1\leq j\leq d\}$ denotes the standard orthonormal basis of $\mathbb{R}^d$. In particular, in the case of $d=2$, one gets
	\begin{equation*}
		\boldsymbol{v}^{\boldsymbol{\theta}}_{1,y}= (1-(1-\theta^{(1)}){\rm e}^{-y},{\rm e}^{-y}\theta^{(2)})^T=(1-{\rm e}^{-y}\theta^{(2)},{\rm e}^{-y}\theta^{(2)})^T=(\theta^{(1)}+(1-{\rm e}^{-y})\theta^{(2)},{\rm e}^{-y}\theta^{(2)})^T;
	\end{equation*}
	\begin{equation*}
		\boldsymbol{v}^{\boldsymbol{\theta}}_{2,y}= ({\rm e}^{-y}\theta^{(1)},1-(1-\theta^{(2)}){\rm e}^{-y})^T=({\rm e}^{-y}\theta^{(1)},1-{\rm e}^{-y}\theta^{(1)})^T=({\rm e}^{-y}\theta^{(1)},\theta^{(2)}+(1-{\rm e}^{-y})\theta^{(1)})^T.
	\end{equation*}
We have all the elements to describe the jump kernel of our first example. 
\begin{theorem}
	\label{thm_quintuple_law_up_to_first_exit_l1_norm}
	Let $(\xi,\Xi)$ be the underlying MAP of $Z$ from (\ref{item_a}) with respect to $\|\cdot\|_1$. The jump kernel of $(\xi,\Xi)$ is given by
	\begin{equation*}
		L(\boldsymbol{\theta},d\boldsymbol{\phi},dy)=\sum_{j=1}^d \Big(c_1^{(j)}\mathbbm{1}_{(0,\infty)}(y)+c_2^{(j)}\mathbbm{1}_{\big(\log(1-\theta^{(j)}),0\big)}(y)\Big)\delta_{\boldsymbol{v}_{j,y}^{\boldsymbol{\theta}}}(d\boldsymbol{\phi})\frac{{\rm e}^y}{|{\rm e}^y-1|^{1+\alpha}}dy,	,
	\end{equation*}
for $\boldsymbol{\theta}=(\theta^{(1)},\ldots,\theta^{(d)})\in\mathcal{S}^{d,+}_{1}$, $(\boldsymbol{\phi},y)\in\mathcal{S}^{d,+}_{1}\times\mathbb{R}$, where the vector $\boldsymbol{v}^{\boldsymbol{\theta}}_{j,y}$ is as defined in (\ref{frequent_vector}), and $\delta_{\boldsymbol{x}}$, for any $\boldsymbol{x}\in\mathbb{R}^d$, denotes the classic Dirac measure on $\mathbb{R}^d.$
\end{theorem}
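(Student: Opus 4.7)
The plan is to read off the jump kernel of $(\xi,\Xi)$ from the L\'evy system of $Z$ via the inverse Lamperti-type transformation (\ref{def_of_underlying_norm_dep_MAP}). Before its first exit from the orthant, $Z$ is a vector of independent one-dimensional $\alpha$-stable processes, so its jump compensator in real time $s$ is $\sum_{j=1}^d \Pi_j(d\Delta)\,ds$, where $\Pi_j$ is given by (\ref{def_levy_measures_of_1dim_indep_stables}) with constants $c_1^{(j)}, c_2^{(j)}$ as in (\ref{c-s}) but with parameter $\rho_j$. Jumps that send $Z$ out of the orthant correspond to killing transitions of $(\xi,\Xi)$, and as such are already accounted for by Lemma \ref{compact_thm_killing_rate}; they do not appear in the jump kernel. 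Thus only jumps of size $\Delta>-x_j$ in coordinate $j$, starting from position $\boldsymbol{x}=(x_1,\ldots,x_d)$, contribute to $L$.

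Fix $\boldsymbol{x}=r\boldsymbol{\theta}$ with $\boldsymbol{\theta}\in\mathcal{S}_1^{d,+}$ and perform the substitution $\Delta=r({\rm e}^y-1)$, so that $d\Delta=r{\rm e}^y\,dy$. Since $x_j+\Delta\geq 0$ in the range of interest, $\|\boldsymbol{x}+\Delta\boldsymbol{e}_j\|_1=r+\Delta=r{\rm e}^y$, and one obtains
\[
\log\|\boldsymbol{x}+\Delta\boldsymbol{e}_j\|_1=\log r+y,\qquad \arg(\boldsymbol{x}+\Delta\boldsymbol{e}_j)={\rm e}^{-y}\boldsymbol{\theta}+(1-{\rm e}^{-y})\boldsymbol{e}_j;
\]
a direct rearrangement shows the latter vector coincides with $\boldsymbol{v}_{j,y}^{\boldsymbol{\theta}}$ in (\ref{frequent_vector}), giving rise to the Dirac mass $\delta_{\boldsymbol{v}_{j,y}^{\boldsymbol{\theta}}}(d\boldsymbol{\phi})$. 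The constraint $\Delta>-x_j$ becomes $y>\log(1-\theta^{(j)})$, which is automatic for $y>0$ and non-trivial for $y<0$. Together with the identity $|\Delta|^{-(1+\alpha)}\,d\Delta = r^{-\alpha}{\rm e}^y|{\rm e}^y-1|^{-(1+\alpha)}\,dy$, this converts $\Pi_j(d\Delta)\mathbbm{1}_{\{\Delta>-x_j\}}$ into $r^{-\alpha}\big(c_1^{(j)}\mathbbm{1}_{(0,\infty)}(y)+c_2^{(j)}\mathbbm{1}_{(\log(1-\theta^{(j)}),0)}(y)\big){\rm e}^y|{\rm e}^y-1|^{-(1+\alpha)}\,dy$.

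Finally, the time change $s=I_t$ from (\ref{def_of_underlying_norm_dep_MAP}) satisfies $dI_t/dt = \|Z_{I_t}\|_1^\alpha$, so that writing the compensation formula for the jumps of $Z$ in real time and changing variables to MAP-time $t$ multiplies the intensity by $r^\alpha$ at $Z_{I_t}=r\boldsymbol{\theta}$. This cancels the factor $r^{-\alpha}$ produced by the polar substitution and leaves a kernel depending only on $\boldsymbol{\theta}$, as must be the case by the translation-invariance property (\ref{additive_lemma}) of the ordinate in Definition \ref{def_MAPs}. Summing over $j=1,\dots,d$ yields the claimed formula for $L(\boldsymbol{\theta},d\boldsymbol{\phi},dy)$. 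The main obstacle is essentially bookkeeping: one must separate jumps that remain inside the orthant (those fed into $L$) from jumps that exit (those that become part of the killing rate $q$ of Lemma \ref{compact_thm_killing_rate}), and track the $r^{\pm\alpha}$ factors so the cancellation correctly produces an $r$-independent kernel; once these are in place, the result falls out of the L\'evy system of the independent stable components.
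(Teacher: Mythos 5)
Your proposal is correct and follows essentially the same route as the paper: both apply the compensation formula for the jumps of the underlying stable process, perform the Lamperti-type time change (whose Jacobian $\|Z_{I_v}\|_1^\alpha$ cancels against the $(-\alpha)$-homogeneity of the stable L\'evy measure to leave a $\boldsymbol{\theta}$-dependent kernel), exploit $\|\boldsymbol{x}+\Delta\boldsymbol{e}_j\|_1=r+\Delta$ for the $L_1$-norm, and substitute $y=\log(1+\Delta/r)$ to land on the stated kernel with the constraint $y>\log(1-\theta^{(j)})$ separating surviving jumps from the killing transitions of Lemma~\ref{compact_thm_killing_rate}. The only cosmetic difference is that you work directly at the level of the kernel while the paper tests against a generic bounded measurable $f$ as in \eqref{def_levy_systems_for_MAPs_ver3}; the content is identical.
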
 

Moving on to the MAP $(\hat{\xi},\hat{\Xi})=(\hat{\xi}_t,\hat{\Xi}_t)_{t\geq0}$ that underlies the ssMp $\hat{X}$ from (\ref{item_b}), its jump structure is formed by two types of jumps, those arising from $\hat{X}$ while in the orthant, which are also described by a jump kernel that equals the one in the previous result, and those that arise when the reflection takes effect. The result we present below gives the distribution of the ``corrective" jump the MAP $(\hat{\xi},\hat{\Xi})$ makes at the time of reflection to account for the effect of reflecting the (symmetric) stable process back into the orthant via a jump. 

Let us denote by $L$ the time of the first corrective jump. Note that $(\hat{\xi}_t,\hat{\Xi}_t)_{0\leq t<L}$ is equal in law to  $(\xi_t, \Xi_t)_{0\leq t< K}$, where we recall that $(\xi, \Xi) $ was the MAP describing $Z$ from (\ref{item_a}), and $K$ denotes its killing time. The first corrective jump  depends on the value of the modulator $\hat{\Xi}$ just prior to the time of the first corrective  jump, that is  $\hat{\Xi}_{\hat{L}-}$. By calculating the distribution of the first corrective jump conditional 
on $\hat{\Xi}_{L-}$, by the strong Markov property, this also gives us the conditional distribution of all subsequent corrective jumps.

This situation is analogous to the one-dimensional setting whereby the underlying L\'evy process of a one-dimensional reflected symmetric stable process was shown to be intimately connected with that of a one-dimensional symmetric stable process killed upon exiting the positive half-line (cf., \cite{kyprianou2014hitting}; or, alternatively, Proposition 5.20 in \cite{kyprianou2022stable}).
%\todo{there is an issue here, how is the time $K$ defined? even though it is the first time the stable leaves the orthant, and this is represented using Lamperti's transform, this does not make sense in terms of the underlying MAP.}
\begin{lemma}
	\label{thm_concise_ordinate_corrective_jump}
Let $(\hat{\xi},\hat{\Xi})=(\hat{\xi}_t,\hat{\Xi}_t)_{t\geq0}$ be the underlying MAP of $\hat{X}$ from (\ref{item_b}) with respect to the $L^1$ norm on $\mathbb{R}^d$, and $L$ be the first time of appearance of a jump of $(\hat{\xi},\hat{\Xi})$ due to the reflection. Let $\tau_D=\inf\{t>0: X_t\notin D\},$ the first exit from the orthant  $D:=(0,\infty)$ by the stable process $X$ in (a). The distribution of the ``corrective" jumps of the MAP $(\hat{\xi},\hat{\Xi})$ is determined by the first exit from the orthant $D$ as follows
\begin{equation}
\begin{split}
&{E}_{\theta}\left(G\left(\log\left(\frac{\|X_{\tau_D}\|_1}{\|X_{\tau_D-}\|_1}\right), \left(\frac{|X^{(i)}_{\tau_D}|}{\|X_{\tau_D}\|}, i\in\{1,\ldots, d\}\right)\right) \Bigg| X_{\tau_D-}\right)%|_{\{X_{\tau_D-}=\beta\}}
\\
&\hspace{2cm}={\mathbb{E}_{0,\theta}\left(G\left(\Delta\hat{\xi}_{L}, \hat{\Xi}_L\right)\Big| \hat{\Xi}_{L-}=X_{\tau_D-}/\norm{X_{\tau_D-}}_1 \right),%|_{\{\hat{\Xi}_{L-}=X_{\tau_D-}/\norm{X_{\tau_D-}}_1\}},
\qquad \theta\in \mathcal{S}_{\|\cdot\|_1} }
\end{split}
\end{equation}
for any bounded measurable function $G:\mathbb{R}\times\mathcal{S}^{d,+}_{1}\to\mathbb{R}$ and any $\boldsymbol{\theta}\in\mathcal{S}^{d,+}_{1}.$ The latter can be computed further via 
\begin{equation}\label{eq:234zillion}
	\mathbb{E}_{(0,\boldsymbol{\theta})}[G(\Delta \hat{\xi}_{L},\hat{\Xi}_{L})|\hat{\Xi}_{L-}]  = H(\hat{\Xi}_{L-})
%	\\
%	&=q(\hat{\Xi}_{L-})^{-1}\cdot\sum_{j=1}^d \int_{\log(1-\hat{\Xi}_{L-}^{(j)})}^\infty G\Big(x,\frac{1}{{\rm e}^x}\big(\hat{\Xi}_{L-}+({\rm e}^x-1)\boldsymbol{e}_j\big)\Big)\cdot ({\rm e}^x+2\hat{\Xi}_{L-}^{(j)}-1)^{-(1+\alpha)}\cdot {\rm e}^x dx,
\end{equation}
where, for $\Xi\in \mathcal{S}^{d,+}_{1}$
\begin{equation}
H(\Xi)=q(\Xi)^{-1}\cdot\sum_{j=1}^d \int_{\log(1- \Xi^{(j)})}^\infty G\Big(x,\frac{1}{{\rm e}^x}\big( \Xi +({\rm e}^x-1)\boldsymbol{e}_j\big)\Big)\cdot ({\rm e}^x+2 \Xi^{(j)}-1)^{-(1+\alpha)}\cdot {\rm e}^x dx,
\end{equation}
  and we are using the notation $\Xi^{(j)}$ for the the $j$-th coordinate of the position $\Xi$ and $q$ is the function given by \eqref{q}  taking $\rho_k=1/2$ for all $k$.
\end{lemma}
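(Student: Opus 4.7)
My plan is to break the argument into two stages, one for each displayed identity.

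For the first identity I would argue that the first corrective jump of $(\hat{\xi},\hat{\Xi})$ is nothing more than the image under the Lamperti-type transformation (\ref{def_of_underlying_norm_dep_MAP}) of the first exit jump of the free stable process $X$ from the orthant. Up to the original time $\tau_D$, the reflected process $\hat{X}$ coincides pathwise with $X$ and with the killed process $Z$ in (\ref{item_a}); consequently, by the converse direction of Theorem \ref{thm_norm_dep_lamperti_transform}, the stopped MAP $(\hat{\xi}_t,\hat{\Xi}_t)_{0\le t<L}$ has the same law as $(\xi_t,\Xi_t)_{0\le t<K}$, and in particular $\hat{\Xi}_{L-}=X_{\tau_D-}/\|X_{\tau_D-}\|_1$. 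At time $\tau_D$, exactly one coordinate of $X$ crosses zero and the reflection flips its sign, so that $\hat{X}_L=(|X^{(i)}_{\tau_D}|)_{i=1}^{d}$. Substituting these identifications into (\ref{def_of_underlying_norm_dep_MAP}) yields $\Delta\hat{\xi}_L=\log(\|X_{\tau_D}\|_1/\|X_{\tau_D-}\|_1)$ and $\hat{\Xi}_L=|X_{\tau_D}|/\|X_{\tau_D}\|_1$, which is the first displayed identity.

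For the explicit formula (\ref{eq:234zillion}), my plan is to compute the conditional law of the exit jump of $X$ by compensation. Using the self-similarity (\ref{ss_property}) I may rescale so that $\|X_{\tau_D-}\|_1=1$ and identify $X_{\tau_D-}$ with $\Xi\in\mathcal{S}^{d,+}_1$. Because the $X^{(i)}$ are independent, the L\'evy measure of $X$ on $\mathbb{R}^d$ is concentrated on the coordinate axes, so exit occurs through a single index $j$. The Ikeda--Watanabe compensation formula, in the guise of the L\'evy system of $X$, then gives that conditionally on $X_{\tau_D-}=\Xi$ the exiting index $j$ carries intensity proportional to $\int_{\Xi^{(j)}}^{\infty}u^{-(1+\alpha)}\,du=\alpha^{-1}(\Xi^{(j)})^{-\alpha}$, and that given $j$, the overshoot $u=-X^{(j)}_{\tau_D}$ has density proportional to $u^{-(1+\alpha)}$ on $(\Xi^{(j)},\infty)$, with overall normalisation equal to the killing rate $q(\Xi)$ of Lemma \ref{compact_thm_killing_rate}. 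To pass to the MAP coordinates I then perform the change of variables $u=e^x+2\Xi^{(j)}-1$ on the admissible range $x>\log(1-\Xi^{(j)})$, with Jacobian $e^x$; an elementary check gives $\hat{\Xi}_L=e^{-x}(\Xi+(e^x-1)\boldsymbol{e}_j)$, and summing over $j$ and dividing by $q(\Xi)$ yields the stated expression.

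The main technical hurdle I anticipate is the careful justification of the conditioning on $\hat{\Xi}_{L-}$ (equivalently on $X_{\tau_D-}$): one must work with a regular version of the conditional law and verify, via the L\'evy-system machinery together with the strong Markov property at $\tau_D$, that the sum-over-$j$ disintegration above is indeed the correct conditional distribution. Once that is in place, the remaining steps reduce to routine algebraic manipulation and the change-of-variables already indicated.
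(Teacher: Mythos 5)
Your proposal is correct and follows essentially the same route as the paper: the first identity comes from the pathwise agreement of $\hat{X}$ with $X$ up to $\tau_D$ together with the sign-flip of the single exiting coordinate, and the explicit formula comes from the compensation formula (L\'evy system) of $X$ at $\tau_D$, the homogeneity \eqref{lemma_levy_measure_pullout_property} to reduce to $\|X_{\tau_D-}\|_1=1$, and the change of variables $|\ell|=e^x+2\Xi^{(j)}-1$; the ``technical hurdle'' you flag is handled in the paper exactly as you anticipate, by testing the claimed kernel against an arbitrary bounded $h(\hat{\Xi}_{L-})$ and matching both sides via the compensation formula. One small slip: the variable with density proportional to $u^{-(1+\alpha)}$ on $(\Xi^{(j)},\infty)$ is the jump magnitude $|\ell|$, not the overshoot $-X^{(j)}_{\tau_D}=|\ell|-\Xi^{(j)}$, though the substitution you actually write down is the correct one for $|\ell|$ and yields the stated integrand.
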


We now move on to the  MAP $(\xi^R,\Xi^R)=(\xi^R_t,\Xi^R_t)_{t\geq0}$ underlying the ssMp $R$ from (\ref{item_c}). As we will see in Section~\ref{sec_skorokhod_reflected_stable}, the construction of $R$ is such that reflections on the boundary of the orthant always happen continuously; which forces the modulator $\Xi^R$ to likewise experience continuous reflection as it hits the boundary of $\mathcal{S}^{d,+}_{1}$. That being the case, the process is completely determined by its generator, which describes both the MAPs' jumps and its behaviour near the boundary as it gets reflected.

\begin{definition}
We say that a function $f\in C^2_b(\mathbb{R}\times\mathcal{S}^{d,+}_{1})$ is in \textit{class $\mathcal{D}$} if it satisfies for each $1\leq i\leq d$ and $\boldsymbol{w}\in[0,\infty)^d\setminus\{\boldsymbol{0}_d\}\text{ with } w_i=0$, the condition
	\begin{equation}
		\label{def:8zillion}
	\Big(\boldsymbol{e}_0+\boldsymbol{e}_i-\|\boldsymbol{w}\|^{-1}_1\sum_{j\neq i} w_j\boldsymbol{e}_j\Big)\cdot\gradient f\big(\log{\|\boldsymbol{w}\|_1},\arg(\boldsymbol{w})\big)=0,
\end{equation}
where the set of vectors $\{\boldsymbol{e}_0,\boldsymbol{e}_1,\ldots,\boldsymbol{e}_d\}$ is just the standard orthonormal basis of $\mathbb{R}^{d+1}.$
\end{definition}
We can now state our main result about the MAP underlying the Skorohod reflected stable process. 
\begin{theorem}
	\label{thm_generator_underlying_MAP_skorokhod_reflection}
	Let $(\xi^R,\Xi^R)$ be the underlying MAP of $R$ from (\ref{ssMp_from_item_c_specific}) with respect to the norm $\|\cdot\|_1$. Then, the operator $A:\mathcal{D}\to C^2_b(\mathbb{R}\times\mathcal{S}^{d,+}_{1})$ defined by
\begin{align*}
&Af(x,\boldsymbol{\theta}) \\
&=c_1\Bigg[ \int_0^\infty \Bigg\{\sum_{j=1}^d f\big(x,\boldsymbol{v}_{j,y}^{\boldsymbol{\theta}}\big)-f(x,\boldsymbol{\theta}) \\
&\qquad\qquad-\Big(\boldsymbol{e}_0+(1-\theta_j)\boldsymbol{e}_j-\sum_{k\neq j} \theta_k \boldsymbol{e}_k\Big)\cdot \grad{f}(x,\boldsymbol{\theta}) \mathbbm{1}_{\big(0,\log(1+e^x)-x\big)}(y)\Bigg\}\frac{e^{y}}{(e^{y}-1)^{\alpha}}dy\Bigg], 
\end{align*}
solves the martingale problem for $(\xi^R,\Xi^R)$, where $c_1=\pi^{-1}\Gamma(1+\alpha)\sin(\pi\alpha)$, and the vector $\boldsymbol{v}_{j,y}^{\boldsymbol{\theta}}$ is from (\ref{frequent_vector}).
\end{theorem}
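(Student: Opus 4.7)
The plan is to obtain the operator $A$ by pulling back the generator of the Skorokhod-reflected stable process $R$ through the norm-dependent Lamperti transformation of Theorem \ref{thm_norm_dep_lamperti_transform}. Recall that the spectrally positive choice of $\alpha$-stable process has only positive jumps with L\'evy measure $c_1 y^{-(1+\alpha)}\mathbbm{1}_{(0,\infty)}(y)dy$, so the Skorokhod reflection of $R$ takes effect only through a \emph{continuous} local-time push along the inward normal $\boldsymbol{e}_i$ on the face $\{r_i=0\}$. Consequently, a standard computation (e.g.\ It\^o/Dynkin formula for reflected L\'evy processes) gives that the generator $\mathcal{L}_R$ of $R$ acts on functions $g\in C^2_b([0,\infty)^d)$ satisfying the Neumann condition $\partial_{r_i} g=0$ on $\{r_i=0\}$ by
\begin{equation*}
\mathcal{L}_R g(\boldsymbol{r}) = c_1 \sum_{j=1}^d \int_0^\infty \Big[g(\boldsymbol{r}+y\boldsymbol{e}_j)-g(\boldsymbol{r})-y\boldsymbol{e}_j\cdot\nabla g(\boldsymbol{r})\mathbbm{1}_{(0,1)}(y)\Big] y^{-(1+\alpha)}dy,
\end{equation*}
and the local martingale $g(R_t)-\int_0^t \mathcal{L}_R g(R_s)\,ds$ identifies the martingale problem for $R$.

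Next I would implement the Lamperti transform. Writing $\boldsymbol{r} = {\rm e}^x\boldsymbol{\theta}$ with $x=\log\|\boldsymbol{r}\|_1$ and $\boldsymbol{\theta}=\arg(\boldsymbol{r})$, the map $g(\boldsymbol{r})=f(x,\boldsymbol{\theta})$ gives (by the chain rule)
\begin{equation*}
\partial_{r_i}g = {\rm e}^{-x}\Big(\partial_x f + \partial_{\theta_i} f - \boldsymbol{\theta}\cdot\nabla_{\boldsymbol{\theta}} f\Big),
\end{equation*}
so the Neumann boundary condition $\partial_{r_i}g(\boldsymbol{r})=0$ on $\{r_i=0\}$ translates precisely into \eqref{def:8zillion}, i.e.\ membership in class $\mathcal{D}$. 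For the jump part, a jump $\boldsymbol{r}\mapsto \boldsymbol{r}+y\boldsymbol{e}_j$ corresponds to $(x,\boldsymbol{\theta})\mapsto (x+y',\boldsymbol{v}^{\boldsymbol{\theta}}_{j,y'})$ where $y'=\log(1+y{\rm e}^{-x})$, and one checks by direct calculation that indeed $({\rm e}^x\boldsymbol{\theta}+y\boldsymbol{e}_j)/({\rm e}^x+y) = \boldsymbol{v}^{\boldsymbol{\theta}}_{j,y'}$. The change of variables $y\mapsto y'$ then produces $c_1 y^{-(1+\alpha)}dy = c_1 {\rm e}^{-\alpha x}{\rm e}^{y'}/({\rm e}^{y'}-1)^{1+\alpha}\,dy'$, while the compensator $y\boldsymbol{e}_j\cdot \nabla_{\boldsymbol{r}}g$ becomes ${\rm e}^{-x}({\rm e}^{y'}-1)\big(\partial_x f+(1-\theta_j)\partial_{\theta_j} f-\sum_{k\neq j}\theta_k\partial_{\theta_k} f\big)$ (after absorbing $\theta_j\partial_{\theta_j} f$ into the $\boldsymbol{\theta}\cdot\nabla_{\boldsymbol{\theta}}f$ term), with the cutoff $\{y<1\}$ becoming $\{y'<\log(1+{\rm e}^x)-x\}$. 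Multiplying by ${\rm e}^{\alpha x}$ to account for the Lamperti time change $dI_t/dt=\|R_{I_t}\|_1^\alpha$ leaves precisely the operator $A$ displayed in the statement.

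Finally, I would upgrade the computation of the generator to the martingale problem. The key observation is that for $f\in\mathcal{D}$, the pull-back $g(\boldsymbol{r}):=f(\log\|\boldsymbol{r}\|_1,\arg(\boldsymbol{r}))$ is in the domain of $\mathcal{L}_R$ by the boundary computation above, so $g(R_t)-\int_0^t\mathcal{L}_R g(R_s)\,ds$ is a local martingale. Performing the time change $t\mapsto I_t$ (which is continuous and adapted to the filtration of $R$) turns this into the corresponding local martingale for $(\xi^R,\Xi^R)$ with integrand $Af(\xi^R_s,\Xi^R_s)$, by the usual stability of local martingales under continuous time changes (combined with the identity ${\rm e}^{\alpha\xi^R_s}dI_s = ds$).

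The main obstacles are twofold and both concern the boundary. First, one has to justify rigorously that the generator of the Skorokhod-reflected spectrally positive stable process in the orthant really is the stable integro-differential operator together with the Neumann conditions on each face of the orthant; for this one can invoke the Skorokhod construction of $R$ via its defining local-time pushing and the associated It\^o formula (or cite existing reflected-L\'evy literature), which is standard but technical because the reflection must be verified face by face and at lower-dimensional boundary strata where several coordinates simultaneously vanish (the process is killed upon hitting $\boldsymbol{0}_d$, which sidesteps the worst corner issue). Second, one must verify that the polar change of variables remains regular enough near the faces $\{\theta_i=0\}$ so that $g=f\circ(\log\|\cdot\|_1,\arg(\cdot))$ is genuinely in $C^2_b([0,\infty)^d\setminus\{\boldsymbol{0}_d\})$ when $f\in\mathcal{D}$, and that the condition \eqref{def:8zillion} is exactly what is needed; this is where the asymmetric look of the boundary condition (the unusual $\boldsymbol{e}_0+\boldsymbol{e}_i-\|\boldsymbol{w}\|_1^{-1}\sum_{j\neq i}w_j\boldsymbol{e}_j$ vector) gets its shape, and the main work is the careful reading of the chain rule displayed above at $\theta_i=0$.
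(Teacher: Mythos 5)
Your proposal follows essentially the same route as the paper: first identify the generator and Neumann-type boundary conditions of the Skorokhod-reflected process $R$ in Cartesian coordinates via It\^o/semimartingale calculus (the paper isolates this as Theorem~\ref{thm_generator_of_skorokhod_reflected_d_dim_spectrally_positive_levy}, stated for general spectrally positive L\'evy processes and then specialized by setting $b_i=\sigma_i=0$), then perform the Lamperti time change (which the paper carries out by invoking Volkonskii's formula) and rewrite the operator and boundary conditions in the polar coordinates $(x,\boldsymbol{\theta})=(\log\|\boldsymbol{w}\|_1,\arg(\boldsymbol{w}))$, with the same chain-rule identity $\partial_{r_i}g=\|\boldsymbol{w}\|_1^{-1}\bigl(f_{y_0}+f_{y_i}-\sum_j\arg(\boldsymbol{w})^{(j)}f_{y_j}\bigr)$ showing that class $\mathcal{D}$ is exactly the pull-back of the Cartesian boundary conditions. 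The computations you display (the change of variables $y'=\log(1+y{\rm e}^{-x})$, the transformed cutoff, and the cancellation of ${\rm e}^{\alpha x}$ against the time change) are the ones the paper performs, so the proposal is correct and matches the paper's argument.
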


\begin{remark}
Notice that the boundary conditions from the previous result in dimension $d=2$ are precisely:
\[
	 (1,1,-1)^T\cdot \grad{f}(w,0,1)=0\enspace\enspace  \forall w\in\mathbb{R}
	\quad\text{ and }\quad
	 (1,-1,1)^T\cdot \grad{f}(w,1,0)=0\enspace\enspace \forall w\in\mathbb{R}.
\]
\end{remark}

To prove Theorem \ref{thm_generator_underlying_MAP_skorokhod_reflection}, we  will establish a more general result for the ssMp $R$ involving, not one-dimensional stable processes $X^{(i)}$, but more general one-dimensional spectrally-positive L\'evy processes; see Theorem \ref{thm_generator_of_skorokhod_reflected_d_dim_spectrally_positive_levy}. We believe that said result, in its own right, might even be of independent mathematical interest.

The last expository result we give describes, through an SDE, the underlying MAP $(\rho,\Theta)=(\rho_t,\Theta_t)_{t\geq0}$ of the ssMp $\mathcal{R}$ from (\ref{item_d}) in dimension $d=2$. We regard it as a corollary to the result (appearing later in the paper), Theorem~\ref{thm_lp_generator_of_MAP_of_SK_reflected_d_dim_BM}, which reveals the generator of $(\rho,\Theta)$ in the more general setting of dimension $d\geq2$.

\begin{corollary}
	\label{cor_sde_of_underlying_MAP_of_sk_reflected_bm_in_l1_d2}
Assume $d=2.$ Let $(\rho,\Theta)=(\rho_t,\Theta_t)_{t\geq0}$ be the underlying MAP of $\mathcal{R}$ from (\ref{item_d}) with respect to the $L^1$ norm on $\mathbb{R}^2$. Then, $(\rho,\Theta)$ is the unique weak solution to the following SDE:
	\begin{equation*}
		d\begin{pmatrix}
			\rho_t \\ \Theta_t
		\end{pmatrix}=\boldsymbol{a}(\Theta_t) dt + \boldsymbol{b}(\Theta_t) d\boldsymbol{W}_t + \boldsymbol{\gamma}(\Theta_t) d\ell_t,
	\end{equation*}
	where $\boldsymbol{W}$ is $2$-dimensional Brownian motion, $\ell$ is local time of $\Theta$ at the boundary of $\mathcal{S}^{2,+}_{1}$ (i.e., $\{(0,1),(1,0)\}$), and for $\boldsymbol{\theta}=(\theta_1,\theta_2)\in\mathcal{S}^{2,+}_{1}$,
	\begin{equation*}
		\boldsymbol{a}(\boldsymbol{\theta})\coloneqq \begin{pmatrix}
			-1 \\ \theta_1-\theta_2 \\ \theta_2-\theta_1
		\end{pmatrix},\qquad \boldsymbol{\gamma}(\boldsymbol{\theta})\coloneqq\begin{pmatrix}
			1 \\ \mathbbm{1}_{\{(0,1)\}}(\boldsymbol{\theta})-\mathbbm{1}_{\{(1,0)\}}(\boldsymbol{\theta}) \\ \mathbbm{1}_{\{(1,0)\}}(\boldsymbol{\theta})-\mathbbm{1}_{\{(0,1)\}}(\boldsymbol{\theta})
		\end{pmatrix},
	\end{equation*}
	and
	\begin{equation*}
		\boldsymbol{b}(\boldsymbol{\theta})\coloneqq\begin{pmatrix}
			\frac{2\theta_1\theta_2}{\sqrt{\lambda_2(\boldsymbol{\theta})}+\sqrt{\lambda_3(\boldsymbol{\theta})}}+\frac{\sqrt{\lambda_2(\boldsymbol{\theta})}+\sqrt{\lambda_3(\boldsymbol{\theta})}}{2} & -\frac{\theta_1-\theta_2}{\sqrt{\lambda_2(\boldsymbol{\theta})}+\sqrt{\lambda_3(\boldsymbol{\theta})}} \\
			-\frac{\theta_1-\theta_2}{\sqrt{\lambda_2(\boldsymbol{\theta})}+\sqrt{\lambda_3(\boldsymbol{\theta})}} & \frac{\sqrt{\lambda_2(\boldsymbol{\theta})}+\sqrt{\lambda_3(\boldsymbol{\theta})}}{4}-\frac{\theta_1\theta_2}{\sqrt{\lambda_2(\boldsymbol{\theta})}+\sqrt{\lambda_3(\boldsymbol{\theta})}} \\
			\frac{\theta_1-\theta_2}{\sqrt{\lambda_2(\boldsymbol{\theta})}+\sqrt{\lambda_3(\boldsymbol{\theta})}} & \frac{\theta_1\theta_2}{\sqrt{\lambda_2(\boldsymbol{\theta})}+\sqrt{\lambda_3(\boldsymbol{\theta})}} - \frac{\sqrt{\lambda_2(\boldsymbol{\theta})}+\sqrt{\lambda_3(\boldsymbol{\theta})}}{4}
		\end{pmatrix},
	\end{equation*}
	where $\lambda_2(\boldsymbol{\theta})=1+\theta_1^2+\theta_2^2+\sqrt{(1+\theta_1^2+\theta_2^2)^2-2}$ and $\lambda_3(\boldsymbol{\theta})=1+\theta_1^2+\theta_2^2-\sqrt{(1+\theta_1^2+\theta_2^2)^2-2}$.
\end{corollary}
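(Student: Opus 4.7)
The plan is to deduce the result directly from Theorem~\ref{thm_lp_generator_of_MAP_of_SK_reflected_d_dim_BM}, which provides the infinitesimal generator of $(\rho,\Theta)$ in all dimensions $d\geq 2$, and then to exhibit a weak solution of the stated SDE whose generator coincides with it, invoking the equivalence between martingale problems and weak solutions for reflected diffusions.

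First, I would specialize the generator from Theorem~\ref{thm_lp_generator_of_MAP_of_SK_reflected_d_dim_BM} to $d=2$. Since $\mathcal{S}_1^{2,+}$ is a one-dimensional open simplex parametrized by $\theta_1+\theta_2=1$, the reflection of $\Theta$ occurs only at the two endpoints $(1,0)$ and $(0,1)$, which produces the local-time term $d\ell_t$ in the SDE. Reading off the first-order part of the generator should yield the drift $\boldsymbol{a}(\boldsymbol{\theta})=(-1,\theta_1-\theta_2,\theta_2-\theta_1)^T$; here the $-1$ in the $\rho$-component is the standard contribution of the Lamperti time-change combined with It\^o's formula applied to $\log\|\mathcal{R}_t\|_1$, while the last two entries arise from applying It\^o to $\mathcal{R}_t/\|\mathcal{R}_t\|_1$. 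The boundary conditions on the domain of the generator (the $d=2$ Brownian analogue of condition~\eqref{def:8zillion}) then translate exactly to the directions of oblique reflection $\boldsymbol{\gamma}(\boldsymbol{\theta})$ at the two corners of $\mathcal{S}_1^{2,+}$.

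The main technical step will be to verify that $\boldsymbol{b}(\boldsymbol{\theta})\boldsymbol{b}(\boldsymbol{\theta})^T$ equals the $3\times 3$ diffusion matrix $\Sigma(\boldsymbol{\theta})$ appearing in the second-order part of the generator. Because the constraint $\Theta^{(1)}+\Theta^{(2)}=1$ must be preserved by the noise, $\Sigma(\boldsymbol{\theta})$ is singular with kernel spanned by $(0,1,1)^T$. A direct computation of its characteristic polynomial shows that its two nonzero eigenvalues are the roots of $\lambda^2-2(1+\theta_1^2+\theta_2^2)\lambda+2=0$, which are precisely $\lambda_2(\boldsymbol{\theta})$ and $\lambda_3(\boldsymbol{\theta})$. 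The formula for $\boldsymbol{b}(\boldsymbol{\theta})$ is then obtained by writing $\Sigma=P\,\mathrm{diag}(\lambda_2,\lambda_3,0)\,P^T$ for an orthonormal eigenbasis $P$ and taking $\boldsymbol{b}=P\,\mathrm{diag}(\sqrt{\lambda_2},\sqrt{\lambda_3})$ as a $3\times 2$ matrix; the denominators $\sqrt{\lambda_2}+\sqrt{\lambda_3}$ in the stated entries arise naturally through the identity $(\sqrt{\lambda_2}+\sqrt{\lambda_3})^2=\lambda_2+\lambda_3+2\sqrt{\lambda_2\lambda_3}$ once one computes the normalized eigenvectors of $\Sigma$ explicitly. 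This entry-by-entry matrix-square-root verification is the step I expect to be the most laborious.

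Finally, to close the argument I would invoke weak uniqueness for the reflected SDE. On the interior of $\mathcal{S}_1^{2,+}$ the coefficients $\boldsymbol{a}$ and $\boldsymbol{b}$ are smooth, while the effective state space of $\Theta$ is a compact interval with two endpoints at which $\boldsymbol{\gamma}$ supplies inward, non-tangential oblique directions of reflection. Well-posedness of such a reflected diffusion, via the Stroock--Varadhan martingale-problem framework, then identifies the unique weak solution of the SDE with the MAP $(\rho,\Theta)$ whose generator is prescribed by Theorem~\ref{thm_lp_generator_of_MAP_of_SK_reflected_d_dim_BM}, completing the proof.
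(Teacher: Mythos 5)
Your proposal follows essentially the same route as the paper: specialize Theorem~\ref{thm_lp_generator_of_MAP_of_SK_reflected_d_dim_BM} to $d=2$, read off the drift, the degenerate $3\times 3$ diffusion matrix $\Sigma(\boldsymbol{\theta})$ and the oblique reflection directions from the boundary conditions, factor $\Sigma$, and invoke the Pilipenko/Stroock--Varadhan correspondence between the reflected martingale problem and weak solutions together with weak uniqueness. Your spectral observations are correct: $\ker\Sigma$ is spanned by $(0,1,1)^T$ and the nonzero eigenvalues are the roots of $\lambda^2-2(1+\theta_1^2+\theta_2^2)\lambda+2=0$, i.e.\ $\lambda_2$ and $\lambda_3$.

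One concrete caveat on the factorization step. The two columns of the stated $\boldsymbol{b}(\boldsymbol{\theta})$ have inner product $-(\theta_1-\theta_2)\neq 0$ in general, so $\boldsymbol{b}$ is \emph{not} of the form $P\,\mathrm{diag}(\sqrt{\lambda_2},\sqrt{\lambda_3})$ for an orthonormal eigenbasis $P$, and the entry-by-entry verification you describe would not reproduce the displayed entries. The paper instead takes the unique symmetric positive-semidefinite square root of the full $3\times3$ matrix $\Sigma$ (this is where the denominators $\sqrt{\lambda_2}+\sqrt{\lambda_3}$ really come from), notes its third column is the negative of its second, and collapses $W^{(2)},W^{(3)}$ into the single driver $W^{(2)}-W^{(3)}$ to get the $3\times2$ matrix. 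Since any $\boldsymbol{b}$ with $\boldsymbol{b}\boldsymbol{b}^T=\Sigma$ yields the same weak solution, your factorization still proves the corollary up to replacing $\boldsymbol{b}$ by an equivalent square root; to obtain the exact stated entries you must follow the symmetric-square-root route, keeping track of the normalization of the combined Brownian driver.
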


The rest of the paper is organized as follows: in Section~\ref{sec_norm_dep_lamperti_ssMps_MAPs} we provide the full proof of Theorem~\ref{thm_norm_dep_lamperti_transform}. In Section~\ref{sec_stable_killed_exit_cone} we provide the more general dimension $d\geq 2$ treatment and characterization of the underlying MAP of the ssMp $Z$ from (\ref{item_a}). And, in Sections~\ref{sec_reflected_symm_stable}, \ref{sec_skorokhod_reflected_stable} and \ref{sec_reflected_BM} we do the same for the underlying MAPs of the ssMps given in (\ref{item_b}), (\ref{item_c}), and (\ref{item_d}), respectively.

\section{Proof of the norm-dependent Lamperti-type transform between ssMps and MAPs}
\label{sec_norm_dep_lamperti_ssMps_MAPs}

This section concerns the proof of Theorem~\ref{thm_norm_dep_lamperti_transform}. We recall that $E$ is assumed to be a locally compact and separable Banach space over the field of reals. We let $\mathcal{E}$ be a sigma-algebra on $E$, and $\partial$ a point not in $E$ which acts as a cemetery state. We define $(E_{\partial},\mathcal{E}_\partial)\coloneqq(E\cup\{\partial\},\sigma(E\cup\{\partial\}))$ to be the measurable space $(E,\mathcal{E})$ \textit{augmented by $\partial$}; cf., \cite{blumenthal2007markov}. A standard practice (which we implicitly employ throughout the paper actually) is to extend an $\mathcal{E}$-measurable function $f:E\to\mathbb{R}$ to an $\mathcal{E}_\partial$-measurable function by setting $f(\partial)=0$.

\begin{proof}[Proof of Theorem~\ref{thm_norm_dep_lamperti_transform}]
	We begin by proving the first part of the theorem. To this end, we denote the law $(\xi,\Xi)$ by $\left(\mathbb{P}_{(\rho,\theta)}, (\rho,\theta)\in\mathbb{R}\times\mathcal{S}_{\|\cdot\|}\right)$. Denote by $Z=(Z_t)_{t\geq0}$ the process with the $\|\cdot\|$-polar decomposition from (\ref{thm_norm_rep_process}). Observe that the law of $Z$ issued from $Z_0=z_0\in E,$ here denoted by $\mathrm{P}_{z_0}$, is the image measure of  $\mathbb{P}_{(\|z_0\|, \arg(z_0))}$ under the transformation in \eqref{thm_norm_rep_process}. The assertions regarding the cemetery state and lifetime of $Z$ follow from the fact that the function $s\mapsto\int_0^s {\rm e}^{\alpha\xi_r}dr$ is non-decreasing and continuous, and has a terminal value $\int^{\zeta}_0\exp\{\alpha\xi_s\}ds.$ Hence, $K=\int^{\zeta}_0\exp\{\alpha\xi_s\}ds$ is the lifetime of $Z$, as defined in (\ref{thm_norm_rep_process}). That the paths of $Z$ are c\`adl\`ag follows immediately from the fact $(\xi,\Xi)$ has c\`adl\`ag paths and the fact that the time-change $\phi$ is right-continuous. The Markov property of $Z$ follows from that of the MAP $(\xi,\Xi)$ using Proposition~7.9 in \cite{kallenberg1997foundations}.  Proving the self-similarity property of $Z$,
	\begin{equation*}
		((Z_t)_{t\geq0},P_{cx})\stackrel{\text{d}}{=}((cZ_{c^{-\alpha}t})_{t\geq0}, P_x),\qquad \forall c>0, x\in E;
	\end{equation*}
	is equivalent to proving that for every $c>0$ and $x\in E,$ we have
	\begin{equation}
		\label{ss_wts}
		(\{(\exp\{\xi_{\phi(t)}\},\Xi_{\phi(t)}),t\geq0\},\mathbb{P}_{(\eta',\theta)})\stackrel{\text{d}}{=}(\{(c\exp\{\xi_{\phi(c^{-\alpha}t)}\},\Xi_{\phi(c^{-\alpha}t)}),t\geq0\},\mathbb{P}_{(\eta,\theta)}),
	\end{equation}
	where $\eta'=\log(\|x\|)+\log(c)$ and $(\eta,\theta)=(\log(\|x\|),\frac{1}{\|x\|}x).$ The latter is an easy consequence of the translation invariance property given in \eqref{additive_lemma}.
	
We now deal with the second part of the theorem. For that end we will prove that the $\mathbb{R}\times\mathcal{S}_{\|\cdot\|}$-valued process $(\xi,\Xi)$, defined by the transformation
	\begin{equation}
		%\label{def_of_underlying_norm_dep_MAP}
		(\xi_t,\Xi_t)=\begin{cases}
			(\log(\|Z_{I_t}\|),\frac{1}{\|Z_{I_t}\|}Z_{I_t}), & \text{if $t<\zeta$}\\
			(-\infty,\partial), & \text{if $t\geq \zeta$};
		\end{cases}
		\label{MAPgendef}
	\end{equation}
	where $I_t\coloneqq\inf\{s>0:\int_0^s \|Z_r\|^{-\alpha} dr>t\}$, $t\geq 0;$  $\zeta:=\int^{K}_0 \|Z_r\|^{-\alpha} dr,$ $K:=\inf\{r>0: \|Z_r\|=0\};$ 
	is a MAP with cemetery state $(-\infty,\partial)$ and lifetime $\zeta$. The law of $(\xi,\Xi),$ denoted by $\left(\mathbb{P}_{x,\theta}, (x,\theta)\in\mathbb{R}\times\mathcal{S}_{\|\cdot\|}\right),$ is the image measure of $\left(P_{z_0}, z_0\in E\right),$ under the above transformation.  Indeed, that this process has c\'adl\'ag paths is a consequence of the fact that $Z$ does and that the time change involved in the construction of $(\xi,\Xi),$ is right-continuous. The couple $(\xi,\Xi)$ also inherits the strong Markov property from that of $Z,$ see e.g. Section III.21 in \cite{rogers2000diffusions}. To verify that the property \eqref{MAP_eqn2} holds, we proceed as follows. Let $g:\mathbb{R}\times E\to\mathbb{R}$  be any bounded measurable function $s,t\geq0$ and $(x,\theta)\in E\times \mathcal{S}_{\|\cdot\|}$. On $\{t<\zeta\}$ we have
	\begin{align*}
		&\mathbb{E}_{(x,\theta)}[g(\xi_{t+s}-\xi_t,\Xi_{t+s})\mathbbm{1}_{\{t+s<\zeta\}}|\mathcal{G}_t]\notag\\
		&=E_{{\rm e}^x\theta}\Bigg[g\Bigg(\log(\frac{\|Z_{I_t+I_s\circ\theta_{I_{t}}}\|}{\|Z_{I_0\circ\theta_{I_{t}}}\|}),\frac{1}{\|Z_{I_t+I_s\circ\theta_{I_{t}}}\|}Z_{I_t+I_s\circ\theta_{I_{t}}}\Bigg)\mathbbm{1}_{\{I_t+I_s\circ\theta_{I_{t}}<K\}}\Bigg|\mathcal{F}_{I_t}\Bigg] \\
		&=E_{{\rm e}^x\theta}\Bigg[ g\Bigg(\log(\frac{\|Z_{I_t+I_s\circ\theta_{I_{t}}}\|}{\|Z_{I_0\circ\theta_{I_{t}}}\|}),\frac{1}{\|Z_{I_t+I_s\circ\theta_{I_{t}}}\|}Z_{I_t+I_s\circ\theta_{I_{t}}}\Bigg)\mathbbm{1}_{\{I_s\circ\theta_{I_{t}}<K\circ\theta_{I_{t}}\}}\Bigg|\mathcal{F}_{I_t}\Bigg]\\
		&=E_{{\rm e}^x\theta}\Bigg[\Bigg(g\Bigg(\log(\frac{\|Z_{I_s}\|}{\|Z_{0}\|}),\frac{1}{\|Z_{I_s}\|}Z_{I_s}\Bigg)\mathbbm{1}_{\{I_s<K\}}\Bigg)\circ\theta_{I_{t}}\Bigg|\mathcal{F}_{I_t}\Bigg]\\
		&=E_{Z_{I_t}}\Bigg[g\Bigg(\log(\frac{\|Z_{I_s}\|}{\|Z_{0}\|}),\frac{1}{\|Z_{I_s}\|}Z_{I_s}\Bigg)\mathbbm{1}_{\{I_s<K\}}\Bigg],
	\end{align*}
	where the last line is an immediate consequence of the strong Markov property of $Z$ and that $I_t$ is a stopping time for it. To conclude the proof, we rewrite the last expression as 
	\begin{equation*}
	\begin{split}
	&E_{Z_{I_t}}\Bigg[g\Bigg(\log(\frac{\|Z_{I_s}\|}{\|Z_{0}\|}),\frac{1}{\|Z_{I_s}\|}Z_{I_s}\Bigg)\mathbbm{1}_{\{I_s<K\}}\Bigg]\\
	&=\mathbb{E}_{\log\|Z_{I_t}\|,\arg(Z_{I_t})}\left[g(\xi_s-\xi_{0}, \Xi_s)\mathbbm{1}_{\{s<\zeta\}}\right]\\
	&=\mathbb{E}_{0,\arg(Z_{I_t})}\left[g(\xi_s, \Xi_s)\mathbbm{1}_{\{s<\zeta\}}\right]. \qed
	\end{split} 
	\end{equation*}

\end{proof}

\section{Stable process killed upon exiting the orthant}
\label{sec_stable_killed_exit_cone}

In this section we consider the high-dimensional analogue of the pssMp obtained by killing a stable L\'evy process upon its first exit from the positive half-line studied in \cite{caballero2006conditioned}. First, let $X=(X_t)_{t\geq0}$ be the $d$-dimensional $\alpha$-stable process with index of self-similarity $\alpha\in(0,2)$ given by
\begin{equation}
	\label{def_of_our_d_dim_stable}
	X_t=(X^{(1)}_t,\ldots,X^{(d)}_t)\in\mathbb{R}^d,\qquad t\geq 0,
\end{equation}
where $X^{(1)}=(X_t^{(1)})_{t\geq0},\ldots,X^{(d)}=(X_t^{(d)})_{t\geq0}$ is a collection of independent (one-dimensional) $\alpha$-stable processes with positivity parameters $\rho_1,\ldots,\rho_d$, respectively, that admit both positive and negative jumps. Let us denote the probabilities of the ssMp $X$ of this section by $(P_{\boldsymbol{x}}, \boldsymbol{x}\in\mathbb{R}^d)$, and the expectation with respect to $P_{\boldsymbol{x}}$ by $E_{\boldsymbol{x}}$.  It is clear that $X$ is an ssMp. It follows that the process $Z=(Z_t)_{t\geq0}$ defined by 
\begin{equation}
	\label{ssMp_a_specific}
Z_t=X_t\mathbbm{1}_{\{t<\tau^D\}},\qquad t\geq0,
\end{equation}
where $\tau^D\coloneqq \inf\{t>0:X_t\notin (0,\infty)^d\}$ is an example of an ssMp absorbed at the zero vector, and thus admits an underlying MAP with respect to the $L_1$-norm, $\norm{\cdot}_1$. Let us also denote the probabilities of $(\xi,\Xi)$ by $\mathbb{P}_{(x,\boldsymbol{\theta})}$, $(x,\boldsymbol{\theta})\in\mathbb{R}\times\mathcal{S}_{1}^{d,+}$; which we know are obtained from those of $Z$ killed at its first hitting time of $0$ through the description given in the second part of Theorem \ref{thm_norm_dep_lamperti_transform}. We shall denote the expectation with respect to $\mathbb{P}_{(x,\boldsymbol{\theta})}$ by $\mathbb{E}_{(x,\boldsymbol{\theta})}$.

To be consistent with the introductory section, we denote this MAP by $(\xi,\Xi)=(\xi_t,\Xi_t)_{t\geq0}$, and denote its lifetime by $\zeta$.

Since each $X^{(i)}$ admits both positive and negative jumps, standard theory says that $X^{(i)}$ does not creep downwards (see Exercise 7.4 (i) of \cite{kyprianou2014fluctuations}). This is to say that each $X^{(i)}$ must exit $(0,\infty)$ via a jump, almost-surely. From the fact that the processes $X^{(i)}, i\in\{1,\ldots, d\}$, are independent, and hence do not have simultaneous jumps, it is straightforward to derive from e.g., \cite{ken1999levy}, that the L\'evy measure of $X$ from (\ref{def_of_our_d_dim_stable}),  which we denote by $\Pi$, satisfies
	\begin{equation}
		\label{levy_measure_of_d_dim_stable}
		\Pi(d\boldsymbol{x})=\sum_{j=1}^d (\bigotimes_{k\neq j} \delta_0(dx_k))\otimes \Pi^{(j)}(dx_j),\qquad\boldsymbol{x}=(x_1,\ldots,x_d)\in\mathbb{R}^d,
	\end{equation}
	 where $\Pi^{(j)}$ is the L\'evy measure of $X^{(j)}$, necessarily taking the form  given in \eqref{def_levy_measures_of_1dim_indep_stables}.

We may also remark in this respect that for every $\lambda>0$ and $\bm x\in\mathbb{R}^d\setminus\{\boldsymbol{0}_{d}\}$, 
	\begin{equation}
	\Pi(\lambda d\boldsymbol{x})=\lambda^{-\alpha} \Pi(d\boldsymbol{x}).\label{lemma_levy_measure_pullout_property}
	\end{equation}

We start off by working out the killing rate function of $(\xi,\Xi)$, that is, we start off by proving Lemma~\ref{compact_thm_killing_rate}.
\begin{proof}[Proof of Lemma~\ref{compact_thm_killing_rate}]
Going back to the proof of Theorem~\ref{thm_norm_dep_lamperti_transform}, we see from (\ref{def_of_underlying_norm_dep_MAP}) that the killing time, $\zeta$, of $(\xi,\Xi)$ equals $$\zeta=\int_0^{\tau^D} \|X_s\|_1^{-\alpha} ds,\qquad \tau^D\coloneqq\inf\{s>0:X_s\notin(0,\infty)^d\}.$$ Fix $\boldsymbol{\theta}=(\theta_1,\ldots,\theta^d)\in\mathcal{S}_{1}^{d,+}$ and a positive bounded measurable function $f$ on $[0,\infty)$.  Now, recalling that stable processes in this subsection don't creep, and hence, that $\tau^D$ is a jump time of $X$, we get
\begin{equation*}
\mathbb{E}_{(0,\boldsymbol{\theta})}[f(\zeta)]=E_{\boldsymbol{\theta}}\left[\sum_{t\geq0}f\left(\int_0^{t} \|X_s\|_1^{-\alpha} ds\right)\mathbbm{1}_{\{X_{t-}\in(0,\infty)^d,X_t\notin(0,\infty)^d\}}\mathbbm{1}_{\{t<\tau^D\}}\right].
\end{equation*} 
 Thanks to the compensation formula, and property \eqref{lemma_levy_measure_pullout_property}, the latter expression takes the form
\begin{align*}
\mathbb{E}_{(0,\boldsymbol{\theta})}[f(\zeta)]&= E_{\boldsymbol{\theta}}\left[\int_0^{\tau^D}\int_{ \mathcal{M}_t}f\left(\int_0^{t} \|X_s\|_1^{-\alpha} ds\right)\Pi(d\boldsymbol{z})dt\right],
\end{align*}
where,
\begin{equation*}
\mathcal{M}_t\coloneqq \bigcup_{j=1}^d \Big( \{0\}^{j-1}\times (-\infty,-X^{(j)}_t)\times\{0\}^{d-j}\Big),\qquad t<\tau^D.
\end{equation*}
By then performing the change-of-variables $\boldsymbol{w}=\|X_t\|_1^{-1}\boldsymbol{z}$, and denoting the the $k$-th coordinate of the vector $\arg(X_t)$ by $\arg(X_t)^{(k)}$, one obtains
\begin{equation*}
\begin{split}
\mathbb{E}_{(0,\boldsymbol{\theta})}[f(\zeta)]&= E_{\boldsymbol{\theta}}\Bigg[\int_0^{\tau^D}\int_{ \mathcal{M}'_t}f\left(\int_0^{t} \|X_s\|_1^{-\alpha} ds\right)\Pi\big(\|X_t\|_1d\boldsymbol{w}\big)dt\Bigg]\notag\\
&=E_{\boldsymbol{\theta}}\Bigg[\int_0^{\tau^D}f\left(\int_0^{t} \|X_s\|_1^{-\alpha} ds\right)\Big(\int_{ \mathcal{M}'_t}\Pi(d\boldsymbol{w})\Big)\|X_t\|_1^{-\alpha}dt\Bigg],
\end{split}
\end{equation*}
where $\mathcal{M}'_t$ is simply the set $\mathcal{M}_t$ with $X^{(j)}_t$ therein replaced by $\arg(X_t)^{(j)}$ for every $j$, and we have applied \eqref{lemma_levy_measure_pullout_property} in the final equality. By performing the change of variables
\begin{equation*}
t=I_v\coloneqq\inf\{s>0:\int_0^s \|X_r\|_1^{-\alpha} dr>v\},\qquad dv=\|X_t\|_1^{-\alpha}dt,
\end{equation*}
and recalling the definition of $(\xi,\Xi)$, we get
\begin{align*}
\mathbb{E}_{(0,\boldsymbol{\theta})}[f(\zeta)]&=\mathbb{E}_{(0,\boldsymbol{\theta})}\Bigg[\int_0^{\zeta}f(v)\Big(\sum_{j=1}^d \Pi^{(j)}\big((-\infty,-\Xi^{(j)}_t)\big)\Big)dv\Bigg] \\
&=\frac{1}{\alpha}\sum_{j=1}^d c_2^{(j)} \mathbb{E}_{(0,\boldsymbol{\theta})}\Bigg[\int_0^{\zeta} \big(\Xi_v^{(j)}\big)^{-\alpha}f(v)dv\Bigg].
\end{align*} Defining the function $q:\mathcal{S}^{d,+}_{1}\to[0,\infty)$,
\begin{equation*}
q(\boldsymbol{\theta})=\frac{1}{\alpha}\sum_{j=1}^d c_2^{(j)}\theta^{-\alpha}_j,\qquad \boldsymbol{\theta}=(\theta_1,\ldots,\theta_d)\in\mathcal{S}^{d,+}_{1}; 
\end{equation*} {\color{black} where the $c^{(j)}_2$ are the respective constants in the jump measure of $X^{(j)}$ as per \eqref{c-s} with $\rho = \rho_j$.}
The previous identity can be written as 
\begin{align*}
\mathbb{E}_{(0,\boldsymbol{\theta})}[f(\zeta)]&= \mathbb{E}_{(0,\boldsymbol{\theta})}\left[\int_0^{\zeta} q(\Xi_v)f(v)dv\right],
\end{align*}
By setting $f(v)=\mathbbm{1}_{(0,T)}(v)$, $T>0$, above and computing the derivative with respect to $T$ evaluated at $0$ one obtains the killing rate. \qed
\end{proof}

\begin{proof}[Proof of Theorem~\ref{thm_quintuple_law_up_to_first_exit_l1_norm}]
From the construction of $(\xi,\Xi)$ (recall \eqref{def_of_underlying_norm_dep_MAP}), we have, for any $t>0$, any positive bounded measurable function $f$ on $\mathbb{R}^{2}\times \mathcal{S}^{d,+}_{1}\times \mathcal{S}^{d,+}_{1},$
\begin{align*}
&\mathbb{E}_{(0,\boldsymbol{\theta})}\Big[\sum_{s\leq t} f\big(\xi_{s-},\Delta \xi_s,\Xi_{s-},\Xi_s\big)\mathbbm{1}_{\{\Delta\xi_s\neq 0\}}\Big] \\
&=E_{\boldsymbol{\theta}}\Bigg[\sum_{s\leq t} f\Big(\log{\|X_{I_s-}\|_1},\log{\frac{\|X_{I_s}\|_1}{\|X_{I_s-}\|_1}},\arg(X_{I_s-}),\arg(X_{I_s})\Big)\mathbbm{1}_{\{\Delta X_{I_s}\neq0\}}\Bigg] \\
&=E_{\boldsymbol{\theta}}\Bigg[\sum_{s\leq I_t} f\Big(\log{\|X_{s-}\|_1},\log{\frac{\|X_{s}\|_1}{\|X_{s-}\|_1}},\arg(X_{s-}),\arg(X_{s})\Big)\mathbbm{1}_{\{\Delta X_{s}\neq0\}}\Bigg].
\end{align*}
Now, since for any $t>0,$  $I_t$ is a stopping time, applying the compensation formula for the Poisson random measure of the jumps of $X$, we have that the rightmost term in the above equation equals:
\begin{align*}
&E_{\boldsymbol{\theta}}\left[\sum_{s\leq I_t} f\Big(\log{\|X_{s-}\|_1},\log{\frac{\|X_{s}\|_1}{\|X_{s-}\|_1}},\arg(X_{s-}),\arg(X_{s})\Big)\mathbbm{1}_{\{\Delta X_{s}\neq0\}}\right]\\
&=E_{\boldsymbol{\theta}}\left[\int^{I_t}_{0}ds \int\Pi(d\boldsymbol{x})\mathbbm{1}_{\{X_{s}+\boldsymbol{x}\in D\}}f\Big(\log{\|X_{s}\|_1},\log{\frac{\|X_{s}+\boldsymbol{x}\|_1}{\|X_{s}\|_1}},\arg(X_{s}),\arg(X_{s}+\boldsymbol{x})\Big)\right].
\end{align*}
Now, making a change of variables $s=I_v,$ and then using property\eqref{lemma_levy_measure_pullout_property}, the latter expression can be written as 
\begin{align*}
&=E_{\boldsymbol{\theta}}\left[\int^{t}_{0}dv \|X_{I_v}\|_1^{\alpha}\int\Pi(d\boldsymbol{x})\mathbbm{1}_{\{X_{I_v}+\boldsymbol{x}\in D\}} f\Big(\log{\|X_{I_v}\|_1},\log{\frac{\|X_{I_v}+\boldsymbol{x}\|_1}{\|X_{I_v}\|_1}},\arg(X_{I_v}),\arg(X_{I_v}+\boldsymbol{x})\Big)\right]\\
&=E_{\boldsymbol{\theta}}\left[\int^{t}_{0}dv \int\Pi(d\boldsymbol{x})\mathbbm{1}_{\{X_{I_v}+ \|X_{I_v}\|_1\boldsymbol{x}\in D\}}\right.\\
&\hspace{3cm}\left. \times  f\Big(\log{\|X_{I_v}\|_1},\log{\frac{\|X_{I_v}+ \|X_{I_v}\|_1\boldsymbol{x}\|_1}{\|X_{I_v}\|_1}},\arg(X_{I_v}),\arg(X_{I_v}+ \|X_{I_v}\|_1\boldsymbol{x})\Big)\right].
\end{align*}
Appealing again to \eqref{def_of_underlying_norm_dep_MAP}, the latter can further be written as
\begin{align*}
&\mathbb{E}_{(0,\boldsymbol{\theta})}\Big[\sum_{s\leq t} f(\xi_{s-},\Delta \xi_s,\Xi_{s-},\Xi_s)\mathbbm{1}_{\{\Delta\xi_s\neq 0\}}\Big] \\
&=\mathbb{E}_{(0,\boldsymbol{\theta})}\Bigg[\int_0^{t\wedge K} \int_{\mathbb{R}} \Bigg\{\sum_{j=1}^d \Big(c_2^{(j)}\mathbbm{1}_{(-\Xi_v^{(j)},0)}(w)+c_1^{(j)}\mathbbm{1}_{(0,\infty)}(w)\Big)\cdot \\
&\hspace{3cm}\times f\Big(\xi_v,\log{\|\Xi_v+w\boldsymbol{e}_j\|_1},\Xi_v,\arg(\Xi_v+w\boldsymbol{e}_j)\Big)|w|^{-(1+\alpha)}\Bigg\}dwdv\Bigg].
\end{align*}
To finish, we still need to make a change of variables. For that end, we note that $v<\zeta$ and $w>-\Xi^{(j)}_v$ imply $\Xi_v\in (0,\infty)^d$ and $\Xi_v^{(j)}+w>0$; in particular, $|\Xi^{(k)}_v|=\Xi^{(k)}_v$, for $k\neq j$, and $|\Xi_v^{(j)}+w|=w+\Xi_v^{(j)}$. As such, for $v<\zeta$ and $w>-\Xi^{(j)}_v$,
\begin{equation*}
	\|\Xi_v+w\boldsymbol{e}_j\|_1=\Big( w+\Xi_v^{(j)}+\sum_{k\neq j} \Xi_v^{(k)} \Big)=\Big( w+\Xi_v^{(j)}+1-\Xi_v^{(j)} \Big)=1+w,
\end{equation*}
where we have used the fact that $\Xi_v\in\mathcal{S}^{d,+}_{1}$ in the last two equalities. The result follows by plugging this into the above formula for the expectation, then performing the following change-of-variables:
\begin{equation*}
	y= \log(w+1),\enspace w={\rm e}^{y}-1,\enspace \frac{dw}{dy}={\rm e}^{y},
\end{equation*}
and lastly, bringing the resulting expression in the desired form displayed in the RHS of \eqref{def_levy_systems_for_MAPs_ver3} in terms of an appropriate transition kernel $L$. One will then indeed find that the said transition kernel is precisely that which is given in the statement of the theorem. \qed
\end{proof}

\section{Reflected symmetric stable process}
\label{sec_reflected_symm_stable}

In this section we consider a high-dimensional analogue of the pssMp obtained by reflecting the path of a symmetric stable L\'evy process in the $x$-axis, which, together with its underlying L\'evy process through the Lamperti transform, were studied in \cite{kyprianou2014hitting}. More precisely, we study the $d$-dimensional ssMp $\hat{X}=(\hat{X}_t)_{t\geq0}$ defined by
\begin{equation}
	\label{def_reflected_symm_d_dim_stable}
\hat{X}_t=(|X^{(1)}_t|,\ldots,|X^{(d)}_t|),\qquad t\geq0,
\end{equation} 
where the $X^{(i)}$ are the $1$-dimensional stable processes from (\ref{def_of_our_d_dim_stable}) with $\alpha\in(0,2)$ and $\rho_i=\frac{1}{2}$, and $|\cdot|$ denotes the usual absolute value in $\mathbb{R}$. We assume this process dies and gets absorbed at the first time it hits the zero vector $\boldsymbol{0}_d\in\mathbb{R}^{d}$.

Like in the one-dimensional version of this particular example, it would be natural to also expect  the underlying MAP $(\hat{\xi},\hat{\Xi})=(\hat{\xi}_t,\hat{\Xi}_t)_{t\geq0}$ of $\hat{X}$ to be interconnected with $(\xi,\Xi)$, the underlying MAP of the process $Z=(Z_t)_{t\geq0}$ from last section. Indeed, it is clear that up to, but not including, the killing time, $\zeta$, of $(\xi,\Xi)$, the paths of these two MAPs coincide, but then at time $\zeta$, instead of being sent to the cemetery state, $(\hat{\xi},\hat{\Xi})$ has to make a ``corrective" jump to stay in the orthant. By the strong Markov property, this procedure is then repeated ad infinitum. Once again, we refer interested readers to the paper \cite{kyprianou2014hitting}, as well as Chapters 5.6 and 5.7 of \cite{kyprianou2022stable}, for the one-dimensional analogue of this construction.

We first introduce some notation: since the $X^{(i)}$ of this section are chosen to have the same positivity parameters $\rho_i= \frac{1}{2}$, it follows that their respective L\'evy measures all agree and are equal to the measure $\Pi$ from (\ref{def_levy_measures_of_1dim_indep_stables}) with 
\begin{equation}
	\label{def_of_special_const_c}
c_1=c_2=c\coloneqq \pi^{-1}\Gamma(1+\alpha)\sin(\pi\alpha/2).
\end{equation}

To proceed with our analysis, we need to define the following \textit{"negation" operator} on $\mathbb{R}^d$:
\begin{definition}
	For $j\in\{1,\ldots,d\}$, define the operator $N^{(j)}:\mathbb{R}^d\to\mathbb{R}^d$ as follows:
	\begin{align*}
		N^{(j)}(\boldsymbol{x})&=(x_1,x_2,\ldots,x_{j-1},-x_j,x_{j+1},\ldots,x_d),\qquad \boldsymbol{x}=(x_1,\ldots,x_d)\in\mathbb{R}^d \\
		&=-x_j\boldsymbol{e}_j+\sum_{i\neq j} x_i \boldsymbol{e}_i,
	\end{align*}	
where as usual, $\{\boldsymbol{e}_j:1\leq j\leq d\}$ denotes the standard orthonormal basis of $\mathbb{R}^d$. The \textit{reflection operator} $R:\mathbb{R}^d\to\mathbb{R}^d$ is defined by
\begin{equation*}
	R(\boldsymbol{x})=\sum_{j=1}^d \mathbbm{1}_{\{x_j<0\}}(\boldsymbol{x}) N^{(j)}(\boldsymbol{x}),\qquad \boldsymbol{x}=(x_1,\ldots,x_d)\in\mathbb{R}^d.
\end{equation*}	
\end{definition}	

We will be applying this negation operator solely to the process $X=(X^{(1)},\ldots,X^{(d)})$ at the times when it jumps out of the first orthant. We know that such a jump can only occur in exactly one of $d$ possible directions, which is to say that at each of $X$'s exit times from the orthant, one and only one of its coordinates $X^{(i)}$ becomes negative. As such, what the above-defined reflection operator $R$ does to $X$ at each of these jump times is to identify which of the $d$ coordinates has become negative and then multiply that coordinate by $-1$ and make it positive/reflect it.

The following recursive construction of $\hat{X}$ will be useful. For $k\in\mathbb{N}$, define the sequence of processes $\hat{X}^{(k)}=(\hat{X}^{(k)}_t)_{t\geq0}$ by 
\begin{equation*}
	\hat{X}^{(k)}_t = 
	\begin{cases}
		\hat{X}^{(k-1)}_t, &\text{if $t<\tau^D_{k-1}$} \\
		R(\hat{X}^{(k-1)}_{\tau^D_{k-1}})	+ \tilde{X}^{(k)}_t, &\text{if $t\geq \tau^D_{k-1}$}
	\end{cases},
\end{equation*}
where $(\hat{X}^0_t)_{t\geq0}=(X_t)_{t\geq0}$, and $\tau^D_0=\tau^D$, $\tilde{X}^{(k)}_t\coloneqq \hat{X}^{(k-1)}_t-\hat{X}^{(k-1)}_{\tau^D_{k-1}}$ (for $t\geq \tau^D_{k-1}$), $\tau^D_{k}\coloneqq\inf\{{\color{black}t>\tau^D_{k-1}}:\hat{X}^{(k)}_t\notin D\}$.  Then for all $k\in\mathbb{N}$, $\hat{X}$ is compatible  with $\hat{X}^{(k)}$ on the time interval $0\leq t<\tau^D_{k}$.

{\color{black} Because $\hat{X}$ is confined to the positive orthant and is a self-similar Markov process, we can identify its MAP $(\hat{\xi},\hat{\Xi})$ simply by the general transformation \eqref{MAPgendef}. }

%{\color{blue}On the other hand, we can also define compatible recursive definition that aligns with the one above for $\hat{X}$. To this end, let
% $(\xi^*,\Xi^*)$ be a copy of the MAP $(\xi,\Xi)$ with the effects of killing removed. From this we obtain the (recursive) definition of $\hat{\Xi}$ in terms of $\Xi^*$:
%\begin{equation}
%	\label{recursive_def_of_reflected_modulator}
%	\hat{\Xi}^{(k)}_t = 
%	\begin{cases}
%		\hat{\Xi}^{(k-1)}_t, &\text{if $t<{\color{red}\hat{\phi}}(\tau^D_{k-1})$} \\
%		R(\hat{\Xi}^{(k-1)}_{\tilde{\phi}(\tau^D_{k-1})})	+ \tilde{\Xi}^{(k)}_t, &\text{if $t\geq {\color{red}\hat{\phi}}(\tau^D_{k-1})$}
%	\end{cases},
%\end{equation}
%where $(\hat{\Xi}^0_t)_{t\geq0}=(\Xi^*_t)_{t\geq0}$,  $\tilde{\Xi}^{(k)}_t\coloneqq {\color{red}\hat{\Xi}^{(k-1)}_t - \hat{\Xi}^{(k-1)}_{\hat{\phi}(\tau^D_{k-1})}}$ (for {\color{red} $t\geq\hat{\phi}(\tau^D_{k-1})$}) {\color{red} and
%\[
%\hat{\phi}(\tau^D_{k-1}) = \int_0^{\tau^D_{k-1}} \norm{\hat{X}_s}_1^{-\alpha}ds. 
%\]} 
%We arrive at the following decomposition of the MAP $(\hat{\xi},\hat{\Xi})$ in terms of $(\xi^*,\Xi^*)$:
%}

{\color{black}
\begin{theorem}
	\label{thm_reflected_MAP_characterization}
	%The underlying MAP, $(\hat{\xi},\hat{\Xi})$, of $\hat{X}$ from (\ref{def_reflected_symm_d_dim_stable}) with respect to norm $\|\cdot\|_1$ on $\mathbb{R}^d$ can be decomposed as follows
	%\begin{enumerate}[(i)]
	%	\item 	
	%The MAP $(\hat{\xi}, \hat{\Xi})$ is defined recursively as follows. 
	Let
 $(\xi^*,\Xi^*)$ be a copy of the MAP $(\xi,\Xi)$ with the effect of killing removed. Moreover, let $L^*\geq0$ be a random variable dependent on $\Xi^*$, whose conditional survival probability satisfies $\Pr(L^*>t \,|\, \Xi^*) = \exp(-\int_0^t q(\Xi^*_s)ds)$, where $q$ is given by \eqref{q} with  $\rho_1 = \dots = \rho_d=\frac{1}{2}$. 
 \begin{enumerate}[(ii)]
 \item[(i)] Recalling $L$ is the time of the  first corrective jump of $(\hat{\xi}, \hat{\Xi})$, then, for each $x\in\mathbb{R}$ and $\boldsymbol{\theta}\in\mathcal{S}^{d,+}_1$, under $\mathbb{P}_{x,\boldsymbol{\theta}}$, $(\hat{\xi}_t, \hat{\Xi}_t)_{0\leq t<L}$ is equal in law to $(\xi^*, \Xi^*)_{0\leq t<L^*}$ with $(\xi^*_0, \Xi^*_0)=(x, \boldsymbol{\theta})$. Moreover, the joint law of $(\Delta \hat{\xi}_L, \hat{\Xi}_{L})$ is given by Lemma \ref{thm_concise_ordinate_corrective_jump}. Given $(\hat{\xi}_L, \hat{\Xi}_L)$, thanks to the Strong Markov property, the 
process $(\hat{\xi}, \hat{\Xi})$ can be constructed iteratively until the next corrective jump  and so on. 

\item[(ii)]
As such, conditional on the sequence of pairs $(L_n, \hat{\Xi}_{L_n-})$, $n= 0,1,2,\dots$, with $L_n$, $n=0,1,2,\dots$ ($L_0 = 0$ and $L_1 = L$) are the corrective jump times of $\hat{\Xi}$, we have 
\begin{equation*} 
\hat{\xi}_t \stackrel{\text{d}}{=}  \xi^*_t+\sum_{k=0}^{N_t} F_k,
 \end{equation*}
		where, $N_t = \sup\{k\in\mathbb{N}: L_k\leq t\}$, and $F_1,F_2,\ldots$ are independent  such that, for Borel $A$ in $\mathbb{R}$, $\Pr(F_k \in A\,|\, \hat{\Xi}_{L_k-})$  is given by \eqref{eq:234zillion} with $G(x, \boldsymbol{\theta}) = \mathbf{1}_{(x\in A)}$.
%		
%		$\stackrel{\text{d}}{=}\Delta \hat{\xi}_{K}$, $\hat{\Xi}$ is the process defined recursively in terms of $\Xi^*$ in (\ref{recursive_def_of_reflected_modulator}) and $(N_t(\hat{\Xi}_t))_{t\geq0}$ ia a non-decreasing $\mathbb{N}_0$-valued process starting from $0$ which makes only jumps of size $1$, and does so at rate $q(\hat{\Xi}_t) dt$, where $q$ is the killing rate function from Lemma~\ref{compact_thm_killing_rate} with $\rho_k=\frac{1}{2}$.
%		%\item 
%		
%		The jump structure of the ``corrective" jumps of the MAP $(\hat{\xi},\hat{\Xi})$ is fully characterized by Lemma \ref{thm_concise_ordinate_corrective_jump}. 
%	
\end{enumerate}
\end{theorem}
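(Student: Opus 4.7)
The plan is to prove part (i) by exploiting the recursive construction of $\hat{X}$ in terms of $X$ and the reflection operator $R$. Since $\hat{X}_t = X_t$ on $[0,\tau^D)$, applying the Lamperti-type transformation \eqref{MAPgendef} to $\hat{X}$ on this interval produces exactly the MAP $(\xi,\Xi)$ underlying $Z = X\mathbbm{1}_{\{t<\tau^D\}}$, but only up to the MAP-clock image of $\tau^D$, which is precisely $L = \int_0^{\tau^D}\|X_s\|_1^{-\alpha}ds$. By Lemma \ref{compact_thm_killing_rate} with $\rho_k = 1/2$ for all $k$, $(\xi,\Xi)$ has state-dependent killing rate $q(\Xi)$. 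A standard identification for Markov processes with multiplicative killing then realizes $(\xi,\Xi)$ as the unkilled MAP $(\xi^*,\Xi^*)$ stopped at the independent time $L^*$ satisfying $\Pr(L^*>t \mid \Xi^*) = \exp(-\int_0^t q(\Xi^*_s)\,ds)$, which delivers the first assertion of (i).

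The joint law of $(\Delta\hat{\xi}_L, \hat{\Xi}_L)$ is then obtained by tracking the jump of $X$ at $\tau^D$: since the $X^{(i)}$ are independent and each fails to creep, $X$ leaves $(0,\infty)^d$ through a jump whose distribution is determined by the L\'evy measure \eqref{levy_measure_of_d_dim_stable}, and the reflection operator $R$ simply negates the unique coordinate that has become negative. Projecting this onto polar coordinates, i.e. taking $\log\|\cdot\|_1$ for the ordinate increment and $\arg(\cdot)$ for the post-jump modulator, and carrying out the same change-of-variables used in the proof of Theorem \ref{thm_quintuple_law_up_to_first_exit_l1_norm}, reproduces exactly the formula asserted in Lemma \ref{thm_concise_ordinate_corrective_jump}.

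For part (ii), I would proceed by induction using the Strong Markov property of $\hat{X}$ at the successive exit times $\tau^D_k$ in the recursive construction: the shifted process $(\hat{X}^{(k+1)}_{\tau^D_k+t})_{t\geq0}$ is a fresh copy of $\hat{X}$ started from $R(\hat{X}^{(k)}_{\tau^D_k})$. Consequently, $(\hat{\xi},\hat{\Xi})$ restarts after each corrective jump time $L_k$ from $(\hat{\xi}_{L_k}, \hat{\Xi}_{L_k})$. Applying part (i) to each of these fresh excursions between reflections yields that, on $[L_k, L_{k+1})$, the ordinate increments of $\hat{\xi}$ agree in law with those of the unkilled MAP $\xi^*$, while at each $L_k$ the ordinate receives an extra jump $F_k$ whose conditional law given $\hat{\Xi}_{L_k-}$ is exactly the one identified in Lemma \ref{thm_concise_ordinate_corrective_jump}. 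Using the translation invariance \eqref{additive_lemma} to paste the pieces end-to-end and summing over the corrective jumps up to $N_t$ then yields the displayed decomposition.

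The main obstacle is that the successive inter-jump pieces of $(\hat{\xi},\hat{\Xi})$ are not mutually independent, since they are woven together through a common modulator trajectory; a path-wise coupling to a single unkilled MAP $(\xi^*,\Xi^*)$ on the same probability space is therefore delicate. I would avoid this difficulty by phrasing the identity in (ii) as a distributional equality conditional on the skeleton $\{(L_k, \hat{\Xi}_{L_k-})\}_{k\geq 0}$, which is the conditioning already singled out in the statement. Conditioning iteratively on each pair and applying the Strong Markov property together with part (i) reduces the proof to the telescoping of the conditional laws of the inter-jump increments and the corrective jumps, from which the claimed identity follows immediately.
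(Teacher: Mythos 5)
Your proposal is correct and follows essentially the same route as the paper: identify $(\hat{\xi},\hat{\Xi})$ before $L$ with the killed MAP of Section~\ref{sec_stable_killed_exit_cone} (the killing rate $q$ becoming the reflection rate), compute the law of the corrective jump from the jump of $X$ at $\tau^D$ via the compensation formula and the change of variables from Theorem~\ref{thm_quintuple_law_up_to_first_exit_l1_norm}, and obtain (ii) by iterating the strong Markov property conditionally on the skeleton $\{(L_k,\hat{\Xi}_{L_k-})\}$. The only detail you leave implicit is that pinning down the \emph{conditional} law given $\hat{\Xi}_{L-}$ requires testing both sides against arbitrary bounded functions $h(\hat{\Xi}_{L-})$ — which is exactly how the paper verifies \eqref{eq:234zillion} — but this is a matter of write-up rather than a gap in the argument.
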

}
\begin{proof}
	The first assertion of part (i) follows immediately from the Markov property of $(\hat{\xi},\hat{\Xi})$, agreeing dynamics of $(\xi,\Xi)$ and $(\hat{\xi},\hat{\Xi})$ for times up to (but not including) time $\zeta$ (the killing time of $(\xi,\Xi)$), and the fact that the time $\zeta$ for $(\xi,\Xi)$, which occurs at rate $q(\hat{\Xi}_t) dt$, corresponds to  the first reflection effect that $(\hat{\xi},\hat{\Xi})$ experiences.
	
	We now prove the second assertion of part (i). As alluded to above, thanks to the Markov property of $(\hat{\xi},\hat{\Xi})$ it suffices to study only the distribution of $((\hat{\xi}_t,\hat{\Xi}_t):t\leq L)$ and to iterate. %Due to agreeing dynamics of $(\hat{\xi},\hat{\Xi})$ and $(\xi,\Xi)$ for times up to (but not including) time $L$, there is nothing unknown about the distribution of $(\hat{\xi},\hat{\Xi})$ for such times. 
	It remains to study
	\begin{equation} \label{imp_char_eq} (\hat{\xi}_{L},\hat{\Xi}_{L})=(\hat{\xi}_{L-}+\Delta \hat{\xi}_{L},\hat{\Xi}_{L}).%=(\xi_{L-}+\Delta \hat{\xi}_{L},\hat{\Xi}_{L}).
	\end{equation}
	The RHS of (\ref{imp_char_eq}) clearly indicates that studying $(\hat{\xi}_{L},\hat{\Xi}_{L})$ is equivalent to studying $(\Delta \hat{\xi}_{L},\hat{\Xi}_{L})$ conditionally on $\hat{\Xi}_{L-}$, since $\hat{\Xi}_{L}$ is  dependent on $\hat{\Xi}_{L-}$ and the fact that $(\hat{\xi}, \hat{\Xi})$ is a MAP means that $\Delta \hat{\xi}_{L}$ is conditionally independent of $\hat{\xi}_{L-}$. 
	
	Recall  $H:\mathcal{S}^{d,+}_{1}\to\mathbb{R}$ is the function
	\begin{equation*}
		H(\boldsymbol{\phi})=q(\boldsymbol{\phi})^{-1}\cdot\sum_{j=1}^d \int_{\log(1-\phi^{(j)})}^\infty G\Big(x,\frac{1}{{\rm e}^x}\big(\boldsymbol{\phi}+({\rm e}^x-1)\boldsymbol{e}_j\big)\Big)\cdot ({\rm e}^x+2\phi^{(j)}-1)^{-(1+\alpha)}\cdot {\rm e}^x dx,
	\end{equation*}
where $\boldsymbol{\phi}=(\boldsymbol{\phi}^{(1)},\ldots,\boldsymbol{\phi}^{(d)})\in\mathcal{S}^{d,+}_{1}$. By the very definition of conditional expectation, to prove (ii) we must show that for every bounded measurable function $h:\mathcal{S}^{d,+}_{1}\to\mathbb{R}$ and $\boldsymbol{\theta}\in\mathcal{S}^{d,+}_{1}$,
\begin{equation}
\label{conditional_expectation_wts}
\mathbb{E}_{(0,\boldsymbol{\theta})}[H(\hat{\Xi}_{L-})h(\hat{\Xi}_{L-})]=\mathbb{E}_{(0,\boldsymbol{\theta})}[G(\Delta \hat{\xi}_{L},\hat{\Xi}_{L})h(\hat{\Xi}_{L-})].
\end{equation}
We begin by computing the LHS utilizing the compensation formula in effectively the same way we had done in our proof of Lemma~\ref{compact_thm_killing_rate} (and for this reason we skip many intermediate steps):
	\begin{align*}
&\mathbb{E}_{(0,\boldsymbol{\theta})}[H(\hat{\Xi}_{L-})h(\hat{\Xi}_{L-})] \\
&=E_{\boldsymbol{\theta}}\Big[T\big(\arg(X_{\tau^D-})\big)h\big(\arg(X_{\tau^D-})\big)\Big] \\
&=c\int_0^\infty \mathbb{E}_{(0,\boldsymbol{\theta})}\Big[\sum_{m=1}^d \int_{-\infty}^{-\Xi_v^{(m)}} H(\Xi_v)h(\Xi_v) |l|^{-(1+\alpha)}dl;v<K\Big] dv \\
&= c\int_0^\infty \mathbb{E}_{(0,\boldsymbol{\theta})}\Bigg[H(\Xi_v)h(\Xi_v)\Big(\sum_{m=1}^d\int_{\Xi_v^{(m)}}^\infty l^{-(1+\alpha)}dl\Big);v<K\Bigg] dv \\
&=\int_0^\infty \mathbb{E}_{(0,\boldsymbol{\theta})}\Big[H(\Xi_v)h(\Xi_v)\Big(\frac{c}{\alpha}\sum_{m=1}^d (\Xi_v^{(j)})^{-\alpha}\Big);v<K\Big] dv \\
&=\int_0^\infty \mathbb{E}_{(0,\boldsymbol{\theta})}\Big[H(\Xi_v)h(\Xi_v)q(\Xi_v);v<K\Big] dv.
\end{align*}
where we recall Lemma~\ref{compact_thm_killing_rate} with $\rho_1= \dots =\rho_d=1/2$ for the last equality.
By the way we had defined $H$, the $q(\Xi_v)$ in the above integrand cancels and we get
\begin{align*} 
&\mathbb{E}_{(0,\boldsymbol{\theta})}[H(\hat{\Xi}_{L-})h(\hat{\Xi}_{L-})] \\
&= \mathbb{E}_{(0,\boldsymbol{\theta})}\Bigg[\int_0^{K}\Bigg\{h(\Xi_v)\sum_{j=1}^d \int_{\log(1-\Xi_v^{(j)})}^\infty G\Big(x,\frac{1}{{\rm e}^x}\big(\Xi_v+({\rm e}^x-1)\boldsymbol{e}_j\big)\Big) ({\rm e}^x+2\Xi_v^{(j)}-1)^{-(1+\alpha)} {\rm e}^x dx\Bigg\}dv\Bigg].
\end{align*}
We will now work out the RHS of (\ref{conditional_expectation_wts}) and show that it equals the above expectation. Firstly,
	\begin{equation}
		\label{eq_3}
		\Delta \hat{\xi}_{L}=\log(\frac{\|\hat{X}_{\tau_D}\|_1}{\|\hat{X}_{\tau_D-}\|_1})=\log(\frac{\|R(X_{\tau_D})\|_1}{\|X_{\tau_D-}\|_1})=\log(\frac{\|X_{\tau_D}\|_1}{\|X_{\tau_D-}\|_1})%=\Delta\xi_\zeta,
	\end{equation}
	
	Secondly, we also need to derive an expression of $\hat{\Xi}_{L}$.% in terms of the MAP $(\xi,\Xi)$. 
	Using only the independence of all the $X^{(i)}$, we observe that on the event $\{\tau^D=(\tau^-_0)^{(j)}\}$, where $(\tau^-_0)^{(j)}\coloneqq\inf{\{t>0: X^{(j)}_t<0\}}$, i.e., on the event that the first time $X$ exits the orthant is through a jump in the $j$-th direction, we have
	\begin{equation}
		\label{eq_5}
		\hat{\Xi}_{L}=N^{(j)}\big(\arg(X_{\tau_D})\big)%N^{(j)}(\Xi_\zeta).
	\end{equation}
Therefore, by using the very definition of the MAP $(\xi,\Xi)$ and applying the usual compensation formula method that brought about Lemma~\ref{compact_thm_killing_rate} and Theorem~\ref{thm_quintuple_law_up_to_first_exit_l1_norm} (for which we skip many of the familiar intermediate steps and algebraic manipulations),
\begin{align*}
&\mathbb{E}_{(0,\boldsymbol{\theta})}\Big[G(\Delta \hat{\xi}_{L},\hat{\Xi}_{L})h(\hat{\Xi}_{L-})\Big] \\
%&=\sum_{j=1}^d \mathbb{E}_{(0,\boldsymbol{\theta})}\Big[G\big(\Delta \xi^*_{\zeta},N^{(j)}(\Xi^*_\zeta)\big)h(\hat{\Xi}_{L-})\mathbbm{1}_{\{\tau^D=(\tau^-_0)^{(j)}\}}\Big] \\
&=\sum_{j=1}^d E_{\boldsymbol{\theta}}\Bigg[G\Big(\log{\frac{\|X_{\tau_D}\|_1}{\|X_{\tau_D-}\|_1}},N^{(j)}\big(\arg(X_{\tau_D})\big)\Big)h\big(\arg(X_{\tau_D-})\big)\mathbbm{1}_{\{\tau_D=(\tau^-_0)^{(j)}\}}\Bigg] \\
&= \mathbb{E}_{(0,\boldsymbol{\theta})}\Bigg[\int_0^{K}\Bigg\{h(\Xi_v)\sum_{j=1}^d \int_{-\infty}^{-\Xi_v^{(j)}} G\Big(\log{\|\Xi_v+\ell\boldsymbol{e}_j\|_1},\frac{1}{\|\Xi_v+\ell\boldsymbol{e}_j\|_1}N^{(j)}(\Xi_v+\ell\boldsymbol{e}_j)\Big) c| \ell |^{-(1+\alpha)}dl\Bigg\}dv\Bigg].
\end{align*}
The normed expressions appearing in the integrand can then be further evaluated in exactly the same way that was done towards the end of the proof of Theorem~\ref{thm_quintuple_law_up_to_first_exit_l1_norm}, and so for that reason we skip these (already familiar) steps. By performing said calculations, one will indeed arrive at the desired LHS of (\ref{conditional_expectation_wts}) we had computed earlier.

{\color{black}Finally, to complete the proof,  part (ii) is a straightforward re-wording of part (i).} \qed
\end{proof}

\section{Skorokhod-reflected stable processes}
\label{sec_skorokhod_reflected_stable}

We investigate a different notion of reflection from that of Section~\ref{sec_reflected_symm_stable}: we study the Skorokhod-reflection of the process $X$ from (\ref{def_of_our_d_dim_stable}), which we denote by $\tilde{R}=(\tilde{R}_t)_{t\geq0}$. This process is defined by
\begin{equation}
\label{def_skorokhod_reflected_d_dim_stable}
\tilde{R}_t=(X_t^{(1)}-(0\land\underbar{X}_t^{(1)}),\ldots,X_t^{(d)}-(0\land\underbar{X}_t^{(d)})),\qquad t\geq0,
\end{equation}
where the $X^{(i)}$ are the $1$-dimensional stable processes from Section~\ref{sec_stable_killed_exit_cone} with $\alpha\in(1,2)$ and positivity parameter $\rho_i\equiv\rho$ satisfying $\alpha(1-\rho)=1$, so that each $X^{(i)}$ is spectrally-positive. We note that, besides the notion of reflection being completely different, a major difference between the reflected $d$-dimensional stable processes $\tilde{R}$ and $\hat{X}$ from the previous Section~\ref{sec_reflected_symm_stable} is that the former's reflection will occur in a smooth/continuous fashion; and we have already seen in Section~\ref{sec_reflected_symm_stable} that $\hat{X}$, instead, always makes a jump at its reflection time. Interested readers can find out more about this notion of reflection and its history in the original two papers of Skorokhod, \cite{skorokhod1961stochastic1}, \cite{skorokhod1962stochastic2}; as well as some later-written papers from various other authors; cf., \cite{tanaka1979stochastic}, \cite{kruk2007explicit}, \cite{dupuis1991lipschitz}. Nevertheless, it can be proved that $\tilde{R}$ is a ssMp. A proof (in dimension one) can be found in Lemma 6.19 of \cite{kyprianou2022stable}. It follows that the process $R=(R_t)_{t\geq0}$ defined by 
\begin{equation}
	\label{ssMp_from_item_c_specific}
R_t=\tilde{R}_t\mathbbm{1}_{\{t<\upsilon\}},\qquad t\geq0,
\end{equation}
 where $\upsilon=\inf\{s>0:\|\tilde{R}_s\|_1=0\}$, taking $\boldsymbol{0}_d\in\mathbb{R}^d$ as a cemetery state,  possess an underlying MAP, which (to be consistent with the introductory section) we denote by $(\xi^R,\Xi^R)=(\xi^R_t,\Xi^R_t)_{t\geq0}$. 

Our aim in this section is to characterize the above MAP. We should note that it is now no longer enough to solely describe its jump structure. Indeed, the nature of the kind of reflection we are studying in this section (happening in a continuous fashion), raises the necessity of a mathematical object that says something about $(\xi^R,\Xi^R)$'s behaviour near the boundary of $D=(0,\infty)^d$ as well, and in particular, the rate at which it "bounces" as it hits the boundary and gets reflected. To this end, the generator of $(\xi^R,\Xi^R)$, as well as all the (reflecting) boundary conditions satisfied by functions in its domain, are the most appropriate in fully characterizing it.

Before stating our results we need to introduce and explain some unconventional terminology and notation we employ in the remainder of this section, as well as Section~\ref{sec_reflected_BM}. Because the MAP we are concerned with takes values in the product space $\mathbb{R}\times\mathcal{S}_1^{d,+}$, which is a subset of $\mathbb{R}^{d+1}$, we need to come up with a more convenient/natural way of expressing the $(d+1)$-dimensional vector $$(y,\boldsymbol{\theta})=(y,\theta^{(1)},\ldots,\theta^{(d)})\in\mathbb{R}\times\mathcal{S},$$
through the orthonormal basis of $\mathbb{R}^{d+1}$. To this end, we shall call the first (traditional) coordinate thereof (containing the entry $y\in\mathbb{R}$) \textit{the $0$-th coordinate} of the vector; and, for $j>0$, the $(j+1)$-th (traditional) coordinate thereof (containing the entry $\theta^{(j)}$) \textit{the $j$-th coordinate} of the vector. This being the case, we define the $(d+1)$-dimensional vector $\boldsymbol{e}_0$ to be the vector 
\begin{equation*}
	\boldsymbol{e}_0\coloneqq (1,\underbrace{0,\ldots,0}_{\text{$d$-many}});
\end{equation*}
and the $(d+1)$-dimensional vector $\boldsymbol{e}_j$, for $j>0$, to be the vector
\begin{equation*}
	\boldsymbol{e}_j\coloneqq (0,\underbrace{0,\ldots,0,1,0,\ldots,0}_{\text{$d$-many coordinates}}),
\end{equation*}
where the entry $1$ is in the (traditional) $(j+1)$-th coordinate of the above vector. The collection of vectors $\{\boldsymbol{e}_0,\boldsymbol{e}_1,\ldots,\boldsymbol{e}_d\}$ is then, actually, nothing other than the standard orthonormal basis of $\mathbb{R}^{d+1}$.

We split the proof of Theorem~\ref{thm_generator_underlying_MAP_skorokhod_reflection} into two parts: we first derive the generator and reflecting boundary conditions of the original $d$-dimensional ssMp, $R$, via semimartingale stochastic integration theory, as per the well-known book by Protter, \cite{protter2005stochastic}. In fact, we obtain a more general result for $R$ involving, not the $X^{(i)}$ of this section, but more general $1$-dimensional spectrally-positive L\'evy processes. Once we have obtained the desired aforementioned objects for $R$, we then "convert" these, through basic calculus and the Volkonskii formula, to those $(\xi^R,\Xi^R)$ must possess. 

To this end, let us now, for the moment, redefine $X^{(i)}$, $1\leq i\leq d$, of this section to be the more general (independent) one-dimensional spectrally-positive L\'evy process with L\'evy-It\^o decomposition
$$X^{(i)}=X^{(i,1)}+X^{(i,2)}+X^{(i,3)},$$
where $X^{(i,1)}_t\coloneqq b_it+\sigma_i B^{(i)}_t$, with $b_i\in\mathbb{R}$, $\sigma_i>0$ and $B^{(i)}=(B^{(i)}_t)_{t\geq0}$ is a (one-dimensional) standard Brownian motion; $$X^{(i,2)}_t=\int_{(0,t]\times [1,\infty)} x N^{(i)}(ds,dx),$$
where $N^{(i)}$ denotes the Poisson random measure associated with the jumps of $X^{(i)}$, is the compound Poisson process responsible for the jumps of $X^{(i)}$ that are of size bigger than $1$ - independent of $X^{(i,1)}$; and $X^{(i,3)}$ is the square-integrable martingale responsible for the jumps of $X^{(i)}$ that are of size smaller than $1$ - independent of both $X^{(i,1)}$ and $X^{(i,2)}$. 

\begin{theorem}
	\label{thm_generator_of_skorokhod_reflected_d_dim_spectrally_positive_levy}
	Let $X=(X_t)_{t\geq 0}$, with $X_t=(X^{(1)}_t,\ldots,X_t^{(d)})$, where the $X^{(i)}$ are independent (standard) one-dimensional spectrally-positive L\'evy processes, as defined just above. Define the Skorokhod-reflection of $X$ by $R=(R^{(1)},\ldots,R^{(d)})$, where $R^{(i)}\coloneqq X^{(i)}-\underbar{X}^{(i)}$. Then, for $g\in C^2_b([0,\infty)^d)$ satisfying the boundary conditions
	\begin{equation*}
		\frac{\partial g}{\partial z_i}(z_1,\ldots,z_{i-1},0,z_{i+1},\ldots,z_d)=0\enspace \forall z_k>0,\qquad i=1,\ldots,d,
	\end{equation*}
	the process
	\begin{equation*}
		g(R_t)-g(R_0)-\int_0^\infty \mathcal{L}g(R_s) ds\text{ is a martingale},
	\end{equation*}
	where $\mathcal{L}:C^2_b([0,\infty)^d)\to C^2_b([0,\infty)^d)$ is the operator defined by
	\begin{equation*}
		\mathcal{L}F(\boldsymbol{w})=\sum_{i=1}^d \Bigg( b_i \frac{\partial F}{\partial z_i}(\boldsymbol{w}) +\frac{\sigma_i^2}{2}\frac{\partial^2 F}{\partial z_i^2}(\boldsymbol{w})+\int_0^\infty \Big(F(\boldsymbol{w}+x\boldsymbol{e}_i)-F(\boldsymbol{w})-\frac{\partial F}{\partial z_i}(\boldsymbol{w}) x\mathbbm{1}_{(0,1)}(x)\Big)\Pi^{(i)}(dx) \Bigg),
	\end{equation*}
	where $\Pi^{(i)}$ denotes the L\'evy measure of $X^{(i)}$.
\end{theorem}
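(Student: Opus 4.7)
The plan is to apply It\^o's formula for semimartingales to $g(R_t)$ and to identify each resulting term either as part of $\int_0^t \mathcal{L}g(R_s)\,ds$ or as a martingale remainder. First I would use the Skorokhod decomposition $R^{(i)}_t = X^{(i)}_t + L^{(i)}_t$, where $L^{(i)}_t := -(0 \wedge \underbar{X}^{(i)}_t)$. Because $X^{(i)}$ is spectrally positive, its running infimum $\underbar{X}^{(i)}$ has no jumps, so $L^{(i)}$ is a continuous, non-decreasing process of finite variation whose support is contained in $\{t : R^{(i)}_t = 0\}$. Independence of the $X^{(i)}$ then implies that no two coordinates jump simultaneously almost surely, and that their continuous local martingale parts have pairwise vanishing quadratic covariation.

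Applying the multidimensional It\^o formula for c\`adl\`ag semimartingales (Theorem~II.33 of \cite{protter2005stochastic}) to $g \in C^2_b([0,\infty)^d)$ produces three groups of terms, which I would treat separately. For the first-order piece $\sum_i \int_0^t \frac{\partial g}{\partial z_i}(R_{s-})\,dR^{(i)}_s$, splitting $dR^{(i)} = dX^{(i)} + dL^{(i)}$ lets me apply the Neumann boundary condition together with the support property of $L^{(i)}$ to conclude that the $dL^{(i)}$-integral vanishes; the remaining $dX^{(i)}$-integral is then decomposed via L\'evy--It\^o into a drift contribution $b_i \frac{\partial g}{\partial z_i}(R_s)\,ds$, a Brownian martingale, and Poisson-type jump integrals. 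The continuous quadratic-variation piece is immediate: $L^{(i)}$ is continuous of finite variation (hence invisible to $[\cdot,\cdot]^c$) and the $B^{(i)}$ are independent, leaving only $\tfrac{1}{2}\sum_i \sigma_i^2 \frac{\partial^2 g}{\partial z_i^2}(R_s)\,ds$. For the jump-correction sum, the no-simultaneous-jumps property decouples the coordinates, and a coordinate-wise application of the compensation formula converts the sum into
\[
\sum_i \int_0^t \int_0^\infty \Big(g(R_{s-}+x\boldsymbol{e}_i) - g(R_{s-}) - \tfrac{\partial g}{\partial z_i}(R_{s-})\, x\mathbf{1}_{(0,1)}(x)\Big)\,\Pi^{(i)}(dx)\,ds
\]
plus a compensated-Poisson martingale.

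Reassembling the pieces, the drift-type terms sum precisely to $\int_0^t \mathcal{L}g(R_s)\,ds$, while the remainder is the sum of a Brownian stochastic integral (a true martingale because $\nabla g$ is bounded), a compensated small-jump integral (an $L^2$-martingale, since $\nabla^2 g$ is bounded and $\int_{(0,1)} x^2\,\Pi^{(i)}(dx) < \infty$), and a compensated large-jump integral of finite rate $\Pi^{(i)}([1,\infty)) < \infty$ with bounded integrand. I expect the main obstacle to be the careful justification of $\int_0^t \frac{\partial g}{\partial z_i}(R_s)\,dL^{(i)}_s = 0$: one must verify, at the Stieltjes level, that $dL^{(i)}$ charges only $\{s : R^{(i)}_s = 0\}$ and that the integrand vanishes on this set by the boundary hypothesis, using continuity of $L^{(i)}$ to make the pointwise evaluation of the integrand unambiguous. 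A secondary subtlety is the well-definedness of $\mathcal{L}g$ itself, which follows from a Taylor expansion of $g$ absorbing the $x\mathbf{1}_{(0,1)}(x)$ correction against $\int_0^1 x^2\,\Pi^{(i)}(dx)$, together with boundedness of $g$ for the $x \geq 1$ tail.
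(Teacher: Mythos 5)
Your proposal is correct and follows essentially the same route as the paper, which likewise applies It\^o's formula for c\`adl\`ag semimartingales (Theorem~II.33 of Protter) to the Skorokhod decomposition, writing $g(R)$ as a function of the semimartingale pair $(X^{(i)},\underline{X}^{(i)})_i$ and killing the local-time term via the Neumann condition; the paper only sketches this and defers details to a thesis, whereas you supply the term-by-term bookkeeping (continuity of $\underline{X}^{(i)}$ by spectral positivity, no simultaneous jumps, compensation of the jump sums, integrability of the martingale remainders) along exactly the intended lines.
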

\begin{proof}
The proof uses the same stochastic-calculus strategy using It\^o's formula (albeit the higher-dimensional variant) that brought about Proposition~5 of \cite{ramirez2024sticky}, which had to do with deriving a martingale associated with the $1$-dimensional sticky L\'evy process. Specifically, we first fix a function $g\in C^2_b([0,\infty)^d)$ and consider the $C^2_b(\mathbb{R}^{2d})$ function $f$ given by $$f(x_1,y_1,x_2,y_2,\ldots,x_d,y_d)=g(x_1-y_1,x_2-y_2,\ldots,x_d-y_d),\quad y_i\leq x_i.$$ We then apply Theorem 33, Chapter 2 of \cite{protter2005stochastic} to the (high-dimensional) semimartingale $$(X^{(1)},\underbar{X}^{(1)},X^{(2)},\underbar{X}^{(2)},\ldots,X^{(d)},\underbar{X}^{(d)})$$ with respect to the above-defined function $f$. By then adopting the aforementioned strategy involving semimartingale stochastic integration theory, we obtain the desired result. A full proof with more details can be found in the PhD thesis of one of the authors of this paper, HSM. \qed
\end{proof}

For some further martingales associated with the Skorokhod-reflection of a general spectrally-positive L\'evy process in dimension one we refer the reader to the paper \cite{nguyen2005some}, where there is an additional emphasis on their applications in fluctuation theory.

We are now in the position to prove Theorem~\ref{thm_generator_underlying_MAP_skorokhod_reflection}.

\begin{proof}[Proof of Theorem~\ref{thm_generator_underlying_MAP_skorokhod_reflection}]

\begin{comment}
	We first recall the definition of $(\xi^R,\Xi^R)$:
	\begin{equation*}
		(\xi^R_t,\Xi_t^R)=(\log{\|R_{I_t}\|},\arg(R_{I_t})),\qquad t<\varsigma,
	\end{equation*}
	where $I_t\coloneqq\inf\{s>0:\int_0^s \|R_u\|^{-\alpha} du>t\}$, for $t<\varsigma\coloneqq \int_0^\upsilon \|R_u\|^{-\alpha} du$, where we recall $\upsilon=\inf\{s>0:R_s=\boldsymbol{0}_d\}$.
	\end{comment}
	
	Because we are dealing with ($1$-dimensional) $\alpha$-stable $X^{(i)}$ - which we know do not possess a linear Brownian motion component in their L\'evy-It\^o decomposition - we take $b_i=\sigma_i=0$ in the statement of Theorem~\ref{thm_generator_of_skorokhod_reflected_d_dim_spectrally_positive_levy}. By Volkonskii's formula (cf., (21.4) of Chapter III.3, \cite{rogers2000diffusions}), for functions $g\in C^2_b([0,\infty)^d)$ satisfying the boundary conditions from the statement of Theorem~\ref{thm_generator_of_skorokhod_reflected_d_dim_spectrally_positive_levy}, the process
	\begin{equation*}
		g(R_{I_t})-g(R_{I_0})-\int_0^t \mathcal{A}g(R_{I_s}) ds,\qquad 0\leq t\leq \phi^R(\upsilon),
	\end{equation*}
where $\phi^R(t)=\int_0^t \|R_u\|_1^{-\alpha}$, $0\leq t<\upsilon$, and $I_t$ is the right-continuous inverse of $\phi^R$, is a martingale, where $\mathcal{A}:C^2_b([0,\infty)^d)\to C^2_b([0,\infty)^d)$ is the operator defined by
	\begin{equation*}
		\mathcal{A}g(\boldsymbol{w})=\|\boldsymbol{w}\|_1^\alpha \mathcal{L}g(\boldsymbol{w}),
	\end{equation*}
	where $\mathcal{L}$ is the operator from the statement of Theorem~\ref{thm_generator_of_skorokhod_reflected_d_dim_spectrally_positive_levy} (with $b_i=\sigma_i=0$, and $\Pi^{(i)}$ being the respective L\'evy measure of $X^{(i)}$ from the setting of this theorem).
	
	Let us now consider the $f\in C^2_b(\mathbb{R}\times\mathcal{S}^{d,+}_{1})$ from our theorem hypothesis, i.e., $f\in\mathcal{D}$. We define the following $C^2_b([0,\infty)^d)$ function $g$: 
	\begin{equation*}
		g:[0,\infty)^d\setminus\{\boldsymbol{0}_d\}\to\mathbb{R},\quad g(\boldsymbol{w})=f\big(\log{\|\boldsymbol{w}\|_1},\arg{(\boldsymbol{w})}\big).
	\end{equation*}
	We claim that $g$ satisfies the boundary conditions from the statement of Theorem~\ref{thm_generator_of_skorokhod_reflected_d_dim_spectrally_positive_levy}. Indeed, by the chain rule, for $\boldsymbol{w}=(w_1,\ldots,w_d)\in[0,\infty)^d\setminus\{\boldsymbol{0}_d\}$ and $i\in\{1,\ldots,d\}$,
	\begin{align*}
		\frac{\partial g}{\partial z_i}(\boldsymbol{w})&=\|\boldsymbol{w}\|_1^{-1}\frac{\partial}{\partial z_i}(\|\boldsymbol{w}\|_1)f_{y_0}(\log{\|\boldsymbol{w}\|_1},\arg(\boldsymbol{w}))-\frac{\partial}{\partial z_i}(\|\boldsymbol{w}\|_1)\sum_{j\neq i} \frac{w_j}{\|\boldsymbol{w}\|_1^2} f_{y_j}(\log{\|\boldsymbol{w}\|_1},\arg(\boldsymbol{w})) \\
		&\qquad+\Big(\frac{1}{\|\boldsymbol{w}\|_1}-\frac{\partial}{\partial z_i}(\|\boldsymbol{w}\|_1)\frac{w_i}{\|\boldsymbol{w}\|_1^2}\Big)f_{y_i}(\log{\|\boldsymbol{w}\|_1},\arg(\boldsymbol{w})) \\
		&=\|\boldsymbol{w}\|_1^{-1}\Big(\frac{\partial}{\partial z_i}(\|\boldsymbol{w}\|_1)f_{y_0}(\log{\|\boldsymbol{w}\|_1},\arg(\boldsymbol{w}))+f_{y_i}(\log{\|\boldsymbol{w}\|_1},\arg(\boldsymbol{w})) \\
		&\qquad-\frac{\partial}{\partial z_i}(\|\boldsymbol{w}\|_1)\sum_{j=1}^d \arg(\boldsymbol{w})^{(j)} f_{y_j}(\log{\|\boldsymbol{w}\|_1},\arg(\boldsymbol{w}))\Big).
	\end{align*}
	From our assumption that $f\in\mathcal{D}$, as well as the trivial fact that $\frac{\partial}{\partial z_i}(\|\boldsymbol{w}\|_1)=1$ for every $i$, it follows that $g$ satisfies the said boundary conditions. Finally, by writing the above operator $\mathcal{A}$ in the coordinate system given by $(x,\boldsymbol{\theta})=(\log{\|\boldsymbol{w}\|_1},\arg(\boldsymbol{w}))$, for $\boldsymbol{w}\in\mathbb{R}^d$, one obtains the desired operator $A$. \qed
\end{proof}
	
\section{Reflected Brownian motion}
\label{sec_reflected_BM}

In this section, using methods from the previous one and some results from the theory of stochastic differential equations of diffusions with reflecting boundary conditions, we obtain the generator of the underlying MAP of $d$-dimensional (Skorokhod-)reflected Brownian motion (a known ssMp with index of self-similarity $2$), as always, with respect to the norm $\|\cdot\|_{1}$. We also derive the SDE of which it is a unique weak solution of in the special case of dimension $d=2$ (which we had seen in Corollary~\ref{cor_sde_of_underlying_MAP_of_sk_reflected_bm_in_l1_d2}). 

We denote the above-mentioned ssMp, up to the time of its absorption at the zero vector, by $\mathcal{R}=(\mathcal{R}_t)_{t\geq0}$, 
\begin{equation}
	\label{ssMp_item_d_specific}
	\mathcal{R}_t\coloneqq (B_t^{(1)}-(0\land\underbar{B}_t^{(1)}),\ldots,B_t^{(d)}-(0\land\underbar{B}_t^{(d)}))\mathbbm{1}_{\{t<\upsilon\}},\qquad \upsilon\coloneqq\inf\{t>0:\mathcal{R}_t=\boldsymbol{0}_d\},
\end{equation}
where the $B^{(i)}$ are independent one-dimensional Brownian motions. We denote the underlying MAP of $\mathcal{R}$ with respect to the $L^1$ norm by $(\rho,\Theta)=(\rho_t,\Theta_t)_{t\geq0}$. 

\begin{theorem}
	\label{thm_lp_generator_of_MAP_of_SK_reflected_d_dim_BM}
 Let $(\rho,\Theta)$ be the underlying MAP of $\mathcal{R}$ from (\ref{ssMp_item_d_specific}) with respect to the norm  $\|\cdot\|_1$. Then, the operator $A:\mathcal{D}\to C^2_b(\mathbb{R}\times\mathcal{S}^{d,+}_{1})$, where $\mathcal{D}$ is the collection of functions from (\ref{def:8zillion}), is given by
	\begin{align*}
		Af(x,\boldsymbol{\theta})&= \sum_{i=0}^d b_i(\boldsymbol{\theta})\frac{\partial f}{\partial y_i}(x,\boldsymbol{\theta}) + \frac{1}{2}\sum_{i,j=0}^d a_{ij}(\boldsymbol{\theta})\frac{\partial^2 f}{\partial y_i\partial y_j}(x,\boldsymbol{\theta}),
	\end{align*}
	where for $\boldsymbol{\theta}=(\theta_1,\ldots,\theta_d)\in\mathcal{S}^{d,+}_{1}$ and  $1\leq i\leq d$,
	\begin{equation*}
		b_0(\boldsymbol{\theta})=-\frac{d}{2},\quad b_i(\boldsymbol{\theta})=d\theta_i-1,\quad a_{00}(\boldsymbol{\theta})= d,\quad a_{ii}(\boldsymbol{\theta})=(1-\theta_i)^2+(d-1)\theta_i^2,\quad a_{0i}(\boldsymbol{\theta})=a_{i0}(\boldsymbol{\theta})= 1-d\theta_i,
	\end{equation*}
	
	\begin{equation*}
		a_{ij}(\boldsymbol{\theta})=a_{ji}(\boldsymbol{\theta})= d\theta_i\theta_j-\theta_i-\theta_j,\qquad i,j\in\{1,\ldots,d\},\enspace i\neq j,
	\end{equation*}
solves the martingale problem for $(\rho,\Theta)$.
\end{theorem}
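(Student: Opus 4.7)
The plan is to mirror the two-step strategy used for Theorem~\ref{thm_generator_underlying_MAP_skorokhod_reflection}: first identify a martingale problem for the ssMp $\mathcal{R}$ itself, and then transport it through the norm-dependent Lamperti transform to obtain one for $(\rho,\Theta)$.

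First, I would establish that for any $g\in C_b^2([0,\infty)^d)$ satisfying the Neumann boundary conditions
\[
\frac{\partial g}{\partial z_i}(z_1,\ldots,z_{i-1},0,z_{i+1},\ldots,z_d)=0,\qquad i=1,\ldots,d,
\]
the process $g(\mathcal{R}_t)-g(\mathcal{R}_0)-\int_0^t \tfrac{1}{2}\Delta g(\mathcal{R}_s)\,ds$ is a martingale. This is the classical characterization of reflecting Brownian motion in the orthant, which also falls out of a direct adaptation of the proof of Theorem~\ref{thm_generator_of_skorokhod_reflected_d_dim_spectrally_positive_levy} with $b_i=0$, $\sigma_i=1$ and vanishing L\'evy measure (only the continuous part of It\^o's formula is needed, and the boundary terms coming from the differential of $\underbar{B}^{(i)}$ vanish thanks to the Neumann condition on the $i$-th face).

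Second, I would apply Volkonskii's formula exactly as in the proof of Theorem~\ref{thm_generator_underlying_MAP_skorokhod_reflection}. Since the index of self-similarity is $\alpha=2$, the time change $I_t$ is the right-continuous inverse of $t\mapsto \int_0^t \|\mathcal{R}_u\|_1^{-2}du$, and the resulting martingale problem for $\mathcal{R}_{I_\cdot}$ is governed by the operator $\mathcal{A}g(\boldsymbol{w})=\tfrac{1}{2}\|\boldsymbol{w}\|_1^{2}\,\Delta g(\boldsymbol{w})$, still acting on $g$'s satisfying the Neumann conditions. Given $f\in\mathcal{D}$, I would then set $g(\boldsymbol{w})\coloneqq f(\log\|\boldsymbol{w}\|_1,\arg(\boldsymbol{w}))$ on $(0,\infty)^d$. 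That $g\in C^2_b$ and satisfies the Neumann conditions is verified by exactly the chain-rule computation carried out at the end of the proof of Theorem~\ref{thm_generator_underlying_MAP_skorokhod_reflection}; indeed condition \eqref{def:8zillion} on $f$ is precisely what is needed for $\partial g/\partial z_i$ to vanish on the face $\{w_i=0\}$.

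The main work—and the main obstacle—will be the algebraic verification that $\mathcal{A}g(\boldsymbol{w})=Af(\log\|\boldsymbol{w}\|_1,\arg(\boldsymbol{w}))$ with the explicit coefficients $b_i, a_{ij}$ in the statement. Writing $y_0=\log\|\boldsymbol{w}\|_1$ and $y_i=w_i/\|\boldsymbol{w}\|_1$, the chain rule gives
\[
\frac{\partial g}{\partial w_k}=\frac{1}{\|\boldsymbol{w}\|_1}\frac{\partial f}{\partial y_0}+\frac{1}{\|\boldsymbol{w}\|_1}\frac{\partial f}{\partial y_k}-\frac{1}{\|\boldsymbol{w}\|_1}\sum_{j=1}^d \theta_j\frac{\partial f}{\partial y_j},
\]
and a second differentiation produces analogous expressions for $\partial^2 g/\partial w_k^2$. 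The explicit $\|\boldsymbol{w}\|_1^{-2}$ factors that arise are exactly cancelled by the prefactor $\|\boldsymbol{w}\|_1^{2}$ in $\mathcal{A}$. Summing over $k=1,\ldots,d$ and grouping the coefficients of each $\partial f/\partial y_i$ and $\partial^2 f/\partial y_i\partial y_j$ (while using $\sum_j\theta_j=1$) should yield, after some moderately tedious but routine simplification, the stated drift vector with $b_0=-d/2$, $b_i=d\theta_i-1$ and the diffusion matrix with $a_{00}=d$, $a_{0i}=1-d\theta_i$, $a_{ii}=(1-\theta_i)^2+(d-1)\theta_i^2$ and $a_{ij}=d\theta_i\theta_j-\theta_i-\theta_j$ for $i\neq j$. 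I would organize the bookkeeping by first expressing $\sum_{k=1}^d \partial^2 g/\partial w_k^2$ as a quadratic form in the partial derivatives of $f$, and then reading off the coefficients in the $(y_0,y_1,\ldots,y_d)$ basis; the identity $\sum_{k=1}^d(\delta_{ik}-\theta_k)(\delta_{jk}-\theta_k)=\delta_{ij}-\theta_i-\theta_j+d\theta_i\theta_j$ is the key combinatorial input that produces the off-diagonal $a_{ij}$'s, and analogous single-sum identities produce the drift and cross terms.
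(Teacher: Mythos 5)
Your proposal follows essentially the same route as the paper: specialize Theorem~\ref{thm_generator_of_skorokhod_reflected_d_dim_spectrally_positive_levy} to $b_i=0$, $\sigma_i=1$, $\Pi^{(i)}\equiv 0$, apply Volkonskii's formula with self-similarity index $2$, and push the resulting operator through $g(\boldsymbol{w})=f(\log\|\boldsymbol{w}\|_1,\arg(\boldsymbol{w}))$ by the chain rule, with the boundary conditions handled exactly as in the proof of Theorem~\ref{thm_generator_underlying_MAP_skorokhod_reflection}. One small slip: your key identity should read $\sum_{k=1}^d(\delta_{ik}-\theta_i)(\delta_{jk}-\theta_j)=\delta_{ij}-\theta_i-\theta_j+d\theta_i\theta_j$ (the subscripts on the $\theta$'s are $i$ and $j$, not $k$); with that correction the bookkeeping yields precisely the stated $b_i$ and $a_{ij}$.
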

\begin{proof}
	By plugging in $\sigma_i=1$, $b_i=0$, and $\Pi^{(i)}\equiv 0$ in Theorem~\ref{thm_generator_of_skorokhod_reflected_d_dim_spectrally_positive_levy}, we see that for functions $g\in C^2_b([0,\infty)^d)$ satisfying the boundary conditions of the aforementioned theorem, the process 
	\begin{equation*}
		g(\mathcal{R}_t)-g(\mathcal{R}_0)-\int_0^t \Big(\frac{1}{2}\sum_{i=1}^d \frac{\partial^2 g}{\partial z_i^2}(\mathcal{R}_s)\Big) ds,\qquad 0\leq t \leq\upsilon,
	\end{equation*}
	is a martingale, where $\upsilon$ was defined in (\ref{ssMp_item_d_specific}).
	
	Using the fact that $\mathcal{R}$ has index of self-similarity $2$ (since Brownian motion does), Volkonskii's formula tells us, in the same way it did in the proof of Theorem~\ref{thm_generator_underlying_MAP_skorokhod_reflection}, that for the above function $g$, the process 
	\begin{equation}
		\label{eq_volk_sk_reflect_BM_d_dim_MAP}
		g(\mathcal{R}_{I_t})-g(\mathcal{R}_0)-\int_0^t \Bigg(\frac{\|\mathcal{R}_{I_s}\|_1^{2}}{2}\sum_{i=1}^d \frac{\partial^2 g}{\partial z_i^2}(\mathcal{R}_{I_s})\Bigg) ds,\qquad t\geq0,
	\end{equation}
	is a martingale, where $I_t$ is the right-continuous inverse of $t\mapsto \int_0^t \|\mathcal{R}_s\|^{-2} ds$. We recall that $(\rho,\Theta)$ is the following space-time transformation of $\mathcal{R}$:
	\begin{equation*}
		(\rho_t,\Theta_t)=(\log{\|\mathcal{R}_{I_t}\|_1},\arg(\mathcal{R}_{I_t})),\qquad t\geq0.
	\end{equation*}
	Consider a $C^2_b(\mathbb{R}\times\mathcal{S}^{d,+}_{1})$ function $f$ from class $\mathcal{D}$. By plugging in the $C^2_b([0,\infty)^d)$ function $g(\boldsymbol{w})=f(\log{\|\boldsymbol{w}\|_1},\arg(\boldsymbol{w}))$ in (\ref{eq_volk_sk_reflect_BM_d_dim_MAP}) and rewriting the expression in the coordinate system given by
	\begin{equation*}
		(x,\boldsymbol{\theta})=(\log{\|\boldsymbol{w}\|_1},\arg(\boldsymbol{w})),\qquad \boldsymbol{w}\in [0,\infty)^d,
	\end{equation*}
	the result will follow. Indeed, our first step is to establish that, under our assumption that $f\in\mathcal{D}$, the above-defined function $g$ satisfies the boundary conditions of Theorem~\ref{thm_generator_of_skorokhod_reflected_d_dim_spectrally_positive_levy}. This, however, has already been done in the proof of Theorem~\ref{thm_generator_underlying_MAP_skorokhod_reflection}. Again in the proof of Theorem~\ref{thm_generator_underlying_MAP_skorokhod_reflection}, we had seen that for $1\leq i\leq d$,
	\begin{align*}
		\frac{\partial g}{\partial z_i}(\boldsymbol{w})&=\|\boldsymbol{w}\|_1^{-1}\Big(f_{y_0}(\log{\|\boldsymbol{w}\|_1},\arg(\boldsymbol{w}))+(1-\arg(\boldsymbol{w})^{(i)})f_{y_i}(\log{\|\boldsymbol{w}\|_1},\arg(\boldsymbol{w})) \\
		&\qquad-\sum_{j\neq i} \arg(\boldsymbol{w})^{(j)} f_{y_j}(\log{\|\boldsymbol{w}\|_1},\arg(\boldsymbol{w}))\Big).
	\end{align*}
	By differentiating once more with respect to the $z_i$ coordinate with the help of the product/quotient and chain rules from calculus, and then rewriting the resulting expression in our other coordinate system, we obtain $\frac{\partial^2 g}{\partial z_i^2}(\boldsymbol{w})$ thus:
	\begin{align*}
		{\rm e}^{-2x}\Big[& -f_{y_0}(x,\boldsymbol{\theta}) + 2(\theta_i-1)f_{y_i}(x,\boldsymbol{\theta}) + 2\sum_{j\neq i} \theta_j f_{y_j}(x,\boldsymbol{\theta}) + f_{y_0^2}(x,\boldsymbol{\theta}) + (\theta_i-1)^2f_{y_i^2}(x,\boldsymbol{\theta}) \\
		&+ \sum_{j\neq i} \theta_j^2f_{y_j^2}(x,\boldsymbol{\theta}) + 2(1-\theta_i)f_{y_0y_i}(x,\boldsymbol{\theta}) -2\sum_{j\neq i} \theta_j f_{y_0y_j}(x,\boldsymbol{\theta}) + \sum_{\substack{k,j\neq i \\ k\neq j}} \theta_j\theta_k f_{y_jy_k}(x,\boldsymbol{\theta}) \\
		&+ 2(\theta_i-1)\sum_{j\neq i} \theta_j f_{y_iy_j}(x,\boldsymbol{\theta}) \Big].
	\end{align*}
	The final step is then, using the above-derived expression, to write 
	\begin{equation*}
		\frac{\|\boldsymbol{w}\|_1^{2}}{2}\sum_{i=1}^d \frac{\partial^2 g}{\partial z_i^2}(\boldsymbol{w}),\qquad \boldsymbol{w}\in[0,\infty)^d,
	\end{equation*}
	in the other coordinate system. Since $\|\boldsymbol{w}\|_1^{2}$ is none other than ${\rm e}^{2x}$ in said coordinate system, summing the above-derived expression over all $i=1,\ldots,d$ and multiplying by $\frac{{\rm e}^{2x}}{2}$ yields the operator $A$ from the statement of the theorem. \qed
\end{proof}

We now follow the recipe from \cite{pilipenko2014introduction} to derive the SDE which the MAP from Theorem~\ref{thm_lp_generator_of_MAP_of_SK_reflected_d_dim_BM} above with $d=2$ (weakly) solves. We recall that this was given in Corollary~\ref{cor_sde_of_underlying_MAP_of_sk_reflected_bm_in_l1_d2}.

\begin{proof}[Proof of Corollary~\ref{cor_sde_of_underlying_MAP_of_sk_reflected_bm_in_l1_d2}]
	Consistent with the notation and terminology of Chapter 2.2 of \cite{pilipenko2014introduction}, what we are dealing with here is a particular example of the Skorokhod problem in the ($3$-dimensional) domain $D=\mathbb{R}\times\mathcal{S}^{2,+}_1\subseteq \mathbb{R}^3$, which has boundary $\partial D=\mathbb{R}\times\{(0,1),(1,0)\}$, and the following reflecting directions:
	\begin{equation}
		\label{eq_reflecting_sets}
		K_{(w,0,1)^T}=\begin{pmatrix}
			1 \\ 1 \\ -1
		\end{pmatrix}\text{ and } K_{(w,1,0)^T}=\begin{pmatrix}
			1 \\ -1 \\ 1
		\end{pmatrix},\qquad w\in\mathbb{R}.
	\end{equation}
	%as described by i and ii of corollary~\ref{cor_generator_of_underlying_MAP_of_sk_reflected_bm_in_l1_d2}. 
	The two equations from (\ref{eq_reflecting_sets}) plainly say that as the modulator $\Theta$ traverses continuously (i.e., without jumping) in the interior of the simplex in the first quadrant and hits either boundary point $(0,1)$ or $(1,0)$, it immediately bounces, again in a continuous fashion, back into the interior of the simplex, from which it came.
	
	We now follow the recipe illustrated at the beginning of Chapter 3.2 of \cite{pilipenko2014introduction}. More precisely, we first observe that, in our case, the analogous operator $L$ from (3.21) therein is the operator $A$ presented in our Theorem~\ref{thm_lp_generator_of_MAP_of_SK_reflected_d_dim_BM} with $d=2$, with the analogous $a_i$ and $\sigma_{ij}$ being, for $\boldsymbol{\theta}=(\theta_1,\theta_2)\in\mathcal{S}^{2,+}_{1}$,
	\begin{equation*}
		a_0(\boldsymbol{\theta})=-1,\qquad a_1(\boldsymbol{\theta})=\theta_1-\theta_2,\qquad a_2(\boldsymbol{\theta})=\theta_2-\theta_1 ;
	\end{equation*}
	and
	\begin{align*}
		&\sigma_{00}(\boldsymbol{\theta})=2,\qquad \sigma_{11}(\boldsymbol{\theta})=\theta_1^2+\theta_2^2,\qquad \sigma_{22}(\boldsymbol{\theta})=\theta_1^2+\theta_2^2,\qquad \sigma_{01}(\boldsymbol{\theta})=\sigma_{10}(\boldsymbol{\theta})=\theta_2-\theta_1,\\ &\sigma_{02}(\boldsymbol{\theta})=\sigma_{20}(\boldsymbol{\theta})=\theta_1-\theta_2,\qquad \sigma_{21}(\boldsymbol{\theta})=\sigma_{12}(\boldsymbol{\theta})=-(\theta_1^2+\theta_2^2);
	\end{align*}
	and the analogous operator $J$ from (3.20) of \cite{pilipenko2014introduction} that describes the reflecting boundary conditions is, in our case, the operator $J:C^2_b(\mathbb{R}\times\mathcal{S}^{2,+}_{1})\to C^2_b(\mathbb{R}\times\mathcal{S}^{2,+}_{1})$, supported on the boundary, given by 
	\begin{equation*}
		Jf(x,\boldsymbol{\theta})=\begin{cases*}
			(1,1,-1)\cdot\grad f(x,\boldsymbol{\theta}),&\text{if $(x,\boldsymbol{\theta})\in\mathbb{R}\times\{(0,1)\},$} \\
			(1,-1,1)\cdot\grad  f(x,\boldsymbol{\theta}),&\text{if $(x,\boldsymbol{\theta})\in\mathbb{R}\times\{(1,0)\};$}
		\end{cases*}
	\end{equation*}
	and therefore, the analogous $\gamma_i$ are, for $\boldsymbol{\theta}=(\theta_1,\theta_2)\in\mathcal{S}^{2,+}_{1}$,
	\begin{equation*}
		\gamma_0(\boldsymbol{\theta})=1,\qquad \gamma_1(\boldsymbol{\theta})=\mathbbm{1}_{\{(0,1)\}}(\boldsymbol{\theta})-\mathbbm{1}_{\{(1,0)\}}(\boldsymbol{\theta}),\qquad \gamma_2(\boldsymbol{\theta})=\mathbbm{1}_{\{(1,0)\}}(\boldsymbol{\theta})-\mathbbm{1}_{\{(0,1)\}}(\boldsymbol{\theta}).
	\end{equation*}
	Our Theorem~\ref{thm_lp_generator_of_MAP_of_SK_reflected_d_dim_BM} states that, for $f\in C^2_b(\mathbb{R}\times\mathcal{S}^{2,+}_{1})$ such that $Jf(x,\boldsymbol{\theta})=0$, $(x,\boldsymbol{\theta})\in\partial D$, the process
	\begin{equation*}
		f(\rho_t,\Theta_t)-f(\rho_0,\Theta_0)-\int_0^t Af(\rho_s,\Theta_s) ds,
	\end{equation*}
	where $t<\varsigma\coloneqq\inf\{s>0:\Theta_s=\partial\}=\int_0^\upsilon \|\mathcal{R}_s\|_{\text{L}^1}^{-2} ds$, is a martingale. In other words, the analogous (3.21) from \cite{pilipenko2014introduction} holds for every $f\in C^2_b(\mathbb{R}\times\mathcal{S}^{2,+}_{1})$ satisfying our analogous (3.20) from the same piece of text. This ensures $(\rho,\Theta)$ as a weak solution to the following (reflecting) SDE:
	\begin{equation}
		\label{sde_with_3_by_3_diffusion_matrix}
		d\begin{pmatrix}
			\rho_t \\ \Theta_t
		\end{pmatrix}=\boldsymbol{a}(\Theta_t) dt + \boldsymbol{b}(\Theta_t) d\boldsymbol{W}_t + \boldsymbol{\gamma}(\Theta_t) dl_t,
	\end{equation}
	where, for $\boldsymbol{\theta}\in\mathcal{S}^{2,+}_{1}$, $\boldsymbol{b}(\boldsymbol{\theta})$ is a matrix $(b_{ij}(\boldsymbol{\theta}))_{i,j=0}^2$ satisfying
	\begin{equation}
		\label{eq_b_and_sigma_relation}
		\sigma_{ij}(\boldsymbol{\theta})=\sum_{k=0}^2 b_{ki}(\boldsymbol{\theta}) b_{kj}(\boldsymbol{\theta})
	\end{equation}
	for every $i,j\in\{0,1,2\}$; $\boldsymbol{a}(\boldsymbol{\theta})$ is the vector $(a_0(\boldsymbol{\theta}),a_1(\boldsymbol{\theta}),a_2(\boldsymbol{\theta}))$; $\boldsymbol{\gamma}(\boldsymbol{\theta})$ is the vector $(\gamma_0(\boldsymbol{\theta}),\gamma_1(\boldsymbol{\theta}),\gamma_2(\boldsymbol{\theta}))$; $\boldsymbol{W}$ is $3$-dimensional Brownian motion, and $l$ is local time of $\Theta$ at the boundary $\partial D$.
	
	As is explained in Theorem 2.2 of \cite{stroock1972support} (albeit in the setting where there are no boundary conditions present, however the same argument still holds even when there are), one can simply choose the matrix $\boldsymbol{b}(\boldsymbol{\theta})$ to be the unique positive-definite symmetric square-root of the matrix $\boldsymbol{\sigma}(\boldsymbol{\theta})=(\sigma_{ij}(\boldsymbol{\theta}))_{i,j=0}^2$ - and this is what we do below. After many (but elementary) linear algebra computations, utilizing the identity $\theta_1+\theta_2=1$, one will find: 
	
	\begin{equation*}
		\boldsymbol{b}(\boldsymbol{\theta})=\begin{pmatrix}
			\frac{2\theta_1\theta_2}{\sqrt{\lambda_2}+\sqrt{\lambda_3}}+\frac{\sqrt{\lambda_2}+\sqrt{\lambda_3}}{2} & -\frac{\theta_1-\theta_2}{\sqrt{\lambda_2}+\sqrt{\lambda_3}} & \frac{\theta_1-\theta_2}{\sqrt{\lambda_2}+\sqrt{\lambda_3}} \\
			-\frac{\theta_1-\theta_2}{\sqrt{\lambda_2}+\sqrt{\lambda_3}} & \frac{\sqrt{\lambda_2}+\sqrt{\lambda_3}}{4}-\frac{\theta_1\theta_2}{\sqrt{\lambda_2}+\sqrt{\lambda_3}} & \frac{\theta_1\theta_2}{\sqrt{\lambda_2}+\sqrt{\lambda_3}} - \frac{\sqrt{\lambda_2}+\sqrt{\lambda_3}}{4} \\
			\frac{\theta_1-\theta_2}{\sqrt{\lambda_2}+\sqrt{\lambda_3}} & \frac{\theta_1\theta_2}{\sqrt{\lambda_2}+\sqrt{\lambda_3}} - \frac{\sqrt{\lambda_2}+\sqrt{\lambda_3}}{4} & \frac{\sqrt{\lambda_2}+\sqrt{\lambda_3}}{4} - \frac{\theta_1\theta_2}{\sqrt{\lambda_2}+\sqrt{\lambda_3}}
		\end{pmatrix},
	\end{equation*}
	where the functions $\lambda_2=\lambda_2(\boldsymbol{\theta})$ and $\lambda_3=\lambda_3(\boldsymbol{\theta})$ are from the statement of the corollary. We now define a new ($2$-dimensional) Brownian motion, $\boldsymbol{\tilde{W}}=(\boldsymbol{\tilde{W}}^{(1)},\boldsymbol{\tilde{W}}^{(2)})$, in terms of our $3$-dimensional Brownian motion, $\boldsymbol{W}=(\boldsymbol{W}^{(1)},\boldsymbol{W}^{(2)},\boldsymbol{W}^{(3)})$, by setting $\boldsymbol{\tilde{W}}^{(1)}\coloneqq \boldsymbol{W}^{(1)}$ and $\boldsymbol{\tilde{W}}^{(2)}\coloneqq \boldsymbol{W}^{(2)}-\boldsymbol{W}^{(3)}$. Finally, observe how we can rewrite the SDE from (\ref{sde_with_3_by_3_diffusion_matrix}) in the following way:
	\begin{equation*}
		d\begin{pmatrix}
			\rho_t \\ \Theta_t
		\end{pmatrix}=\boldsymbol{a}(\Theta_t) dt + \boldsymbol{\tilde{b}}(\Theta_t) d\boldsymbol{\tilde{W}}_t + \boldsymbol{\gamma}(\Theta_t) dl_t,
	\end{equation*}
	where
	\begin{equation*}
		\boldsymbol{\tilde{b}}(\boldsymbol{\theta})\coloneqq\begin{pmatrix}
			\frac{2\theta_1\theta_2}{\sqrt{\lambda_2}+\sqrt{\lambda_3}}+\frac{\sqrt{\lambda_2}+\sqrt{\lambda_3}}{2} & -\frac{\theta_1-\theta_2}{\sqrt{\lambda_2}+\sqrt{\lambda_3}} \\
			-\frac{\theta_1-\theta_2}{\sqrt{\lambda_2}+\sqrt{\lambda_3}} & \frac{\sqrt{\lambda_2}+\sqrt{\lambda_3}}{4}-\frac{\theta_1\theta_2}{\sqrt{\lambda_2}+\sqrt{\lambda_3}} \\
			\frac{\theta_1-\theta_2}{\sqrt{\lambda_2}+\sqrt{\lambda_3}} & \frac{\theta_1\theta_2}{\sqrt{\lambda_2}+\sqrt{\lambda_3}} - \frac{\sqrt{\lambda_2}+\sqrt{\lambda_3}}{4}
		\end{pmatrix}.
	\end{equation*}
	And so we discern that the source of randomness of the SDE our MAP solves is actually two-dimensional. As for the uniqueness of the solution to the SDE, it is not difficult to check that the required conditions detailed in Section 5 of \cite{stroock1971diffusion}; or, alternatively, those of Theorem 3.2.3 or Theorem 2.1.1 from \cite{pilipenko2014introduction} (as remarked in Exercise 2.2.1 of \cite{pilipenko2014introduction}, the conditions for uniqueness listed in Theorem 2.1.1 therein also hold in our setup with domain $D$) are satisfied by our setup.  \qed
\end{proof}

\section*{Acknowledgements}
HSM is supported by the Mathematics and Statistics Centre for Doctoral Training, and gratefully acknowledges funding from the University of Warwick. Part of this work was conducted while VR was visiting  the Department of Statistics at the University of Warwick, United Kingdom; he would like to thank his hosts for partial financial support as well as for their kindness and hospitality. In addition, VR is grateful for additional financial support from CONAHCyT-Mexico, grant nr. 852367.

	\bibliographystyle{plain}
\bibliography{norm_dependent_lamperti_type_connection_ssMps_MAPs_orthant_bib.bib}
\end{document}